\documentclass[a4paper,12pt,reqno]{amsart}

\linespread{1.00}
\usepackage{amsmath}
\usepackage{amscd}
\usepackage{amssymb}
\usepackage{mathrsfs}
\usepackage[left=2.5cm,right=2.5cm,bottom=3cm,top=3cm]{geometry}
\newtheorem{thm}{Theorem}[section]

\newtheorem{disc}[thm]{Discussion}

\newtheorem{lem}[thm]{Lemma}

\newtheorem{ques}[thm]{Question}
\newtheorem{prop}[thm]{Proposition}
\theoremstyle{definition}
\newtheorem{defn}[thm]{Definition}
\newtheorem{rem}[thm]{Remark}

\numberwithin{equation}{section}

\begin{document}

\title[Holomorphic sectional curvature, nefness \& Miyaoka-Yau]{Holomorphic sectional curvature, nefness and Miyaoka-Yau type inequality}

\author{Yashan Zhang}
\thanks{The author is partially supported by Fundamental Research Funds for the Central Universities (No. 531118010468) and National Natural Science Foundation of China (No. 12001179)}

\address{School of Mathematics and Hunan Province Key Lab of Intelligent Information Processing and 
Applied Mathematics, Hunan University, Changsha 410082, China}
\email{yashanzh@hnu.edu.cn}

\begin{abstract}
On a compact K\"ahler manifold, we introduce a notion of almost nonpositivity for the holomorphic sectional curvature, which by definition is weaker than the existence of a K\"ahler metric with semi-negative holomorphic sectional curvature. We prove that a compact K\"ahler manifold of almost nonpositive holomorphic sectional curvature has a nef canonical line bundle, contains no rational curves and satisfies some Miyaoka-Yau type inequalities. In the course of the discussions, we attach a real value to any fixed K\"ahler class which, up to a constant factor depending only on the dimension of manifold, turns out to be an upper bound for the nef threshold.
\end{abstract}

\maketitle

\section{Introduction}
\subsection{Background} Compact K\"ahler manifolds of nef canonical line bundle (i.e. minimal models) are of great importance and interest from the viewpoint of algebraic geometry (in particular, for the minimal model program). In complex geometry, one of the fundamental goals is to give geometric criteria for positivity (e.g. nefness or ampleness) of the canonical line bundle. According to conjectures of S.-T. Yau, the negativity of the holomorphic sectional curvature should closely affect the positivity of the canonical line bundle. Precisely, Yau conjectured that a compact K\"ahler manifold $(X,\omega)$ of negative holomorphic sectional curvature has an ample canonical line bundle. This conjecture was solved in a breakthrough of Wu-Yau \cite{WY1} provided that the manifold is projective (for the surface case and threefold case, it was previously proved by B. Wong \cite{W} and Heier-Lu-B. Wong \cite{HLW}, respectively); Tosatti-Yang \cite{TY} extended Wu-Yau's work to the K\"ahler case and hence proved Yau's conjecture in full generality. Diverio-Trapani \cite{DT} and Wu-Yau \cite{WY2}  further proved that a compact K\"ahler manifold $(X,\omega)$ of quasi-negative holomorphic sectional curvature has ample canonical line bundle (see P. Wong-Wu-Yau \cite{WWY} for the special case that the manifold has Picard number one). One can also find a K\"ahler-Ricci flow approach in Nomura \cite{No} for results of Wu-Yau \cite{WY1} and Tosatti-Yang \cite{TY}. On the other hand, thanks to Yau's Schwarz Lemma \cite{Y78}, a compact K\"ahler manifold $(X,\omega)$ of semi-negative holomorphic sectional curvature does not contain any rational curve and hence, if $X$ is projective, its canonical line bundle must be nef by Mori's Cone Theorem (see e.g. \cite{Ma}); in general if $X$ is K\"ahler, then its canonical line bundle is also nef, thanks to a recent work of Tosatti-Yang \cite[Theorem 1.1]{TY}. 

\subsection{Motivations}
In general, for a given positivity concept (e.g. ampleness or nefness) of the canonical line bundle, one may naturally wonder what is the ``weakest/optimal" negativity condition on the holomorphic sectional curvature which implies the asserted positivity concept of the canonical line bundle? In contrast to the ampleness of the canonical line bundle (which is equivalent to the existence of \emph{\textbf{one}} K\"ahler \emph{\textbf{metric}} with negative Ricci curvature), the nefness of the canonical line bundle is defined by taking limits of \emph{\textbf{sequences}} of K\"ahler \emph{\textbf{classes}}, and hence it seems not ``optimal" to derive the nefness of the canonical line bundle from the semi-negativity of the holomorphic sectional curvature of one K\"ahler metric. Then it may be natural to ask:

\begin{ques}\label{ques}
For a compact K\"ahler manifold, can we find some natural (negativity) properties in terms of the holomorphic sectional curvature, which are weaker than the existence of a K\"ahler metric with semi-negative holomorphic sectional curvature but can still guarantee the nefness of the canonical line bundle?
\end{ques}
In this paper, we shall propose such a property. 
\subsection{An almost nonpositivity notion for the holomorphic sectional curvature} We first associate a real number to a K\"ahler class as follows.

\begin{defn}\label{defn1}
Let $X$ be a compact K\"ahler manifold and $\alpha$ a K\"ahler class on $X$. We define a number  $\mu_\alpha$ associated with $\alpha$ in the following way: \\
\centerline{$\mu_\alpha:=\inf\{\sup_XH_\omega|\omega$ is a K\"ahler metric in $\alpha\}$.}
\end{defn}

For a compact K\"ahler manifold $(X,\omega)$, $\sup_XH_{\omega}$ means the maximal value of the holomorphic sectional curvature of $\omega$ on $X$. To see the well-definedness of $\mu_\alpha$ in the above definition, in Section \ref{mu} we will show the following

\begin{prop}[=Proposition \ref{prop.mu}]\label{prop1.3}
For any K\"ahler class $\alpha$ on $X$, we have $\mu_\alpha>-\infty$.
\end{prop}

Obviously, $\mu_\alpha<0$ if and only if there exists a K\"ahler metric $\omega\in\alpha$ of negative holomorphic sectional curvature; however, we may observe that the condition \emph{$\mu_\alpha=0$ is by definition a condition weaker than the existence of a K\"ahler metric $\omega\in\alpha$ of semi-negative holomorphic sectional curvature}.
\par The number $\mu_{\alpha}$ has some basic properties, e.g. $\mu_{c\alpha}=c^{-1}\mu_{\alpha}$ for any positive constant $c$, $\mu_{\alpha|_{Y}}\le\mu_{\alpha}$ for any compact complex submanifold $Y$ of $X$, and $\mu_{\alpha}$ is invariant under biholomorphisms, namely, given a biholomorphism $\pi:Y\to X$ between two compact K\"ahler manifolds and a K\"ahler class $\alpha$ on $X$, we have $\mu_{\pi^*\alpha}=\mu_\alpha$. 
\par Then we define an almost nonpositivivity notion for the holomorphic sectional curvature as follows.

\begin{defn}\label{defn}
Let $(X,\hat\omega)$ be a compact K\"ahler manifold. We say that $X$ is of \textbf{almost nonpositive holomorphic sectional curvature} if for any number $\epsilon>0$, there exists a K\"ahler class $\alpha_\epsilon$ on $X$ such that $\mu_{\alpha_\epsilon}\alpha_\epsilon<\epsilon[\hat\omega]$ (i.e. $-\mu_{\alpha_\epsilon}\alpha_\epsilon+\epsilon[\hat\omega]$ is a K\"ahler class).
\end{defn}

\begin{rem}\label{rem1.4}
We remark that the almost nonpositivity notion for holomorphic sectional curvature defined in Definition \ref{defn} is a notion at the level of \textbf{$(1,1)$-classes}, not $(1,1)$-forms. 
\end{rem}
Obviously, a compact K\"ahler manifold admitting a K\"ahler metric with negative or semi-negative holomorphic sectional curvature must be of almost nonpositive holomorphic sectional curvature.

\subsection{Nefness of the canonical line bundle} The first main result is a generalization of Tosatti-Yang's result \cite[Theorem 1.1]{TY} to compact K\"ahler manifolds of almost nonpositive holomorphic sectional curvature.
\begin{thm}\label{mainthm}
A compact K\"ahler manifold of almost nonpositive holomorphic sectional curvature has a nef canonical line bundle.
\end{thm}

\begin{rem}
The Definition \ref{defn} of the almost nonpositivity for the holomorphic sectional curvature is mainly motivated by the definition of nefness of the canonical line bundle (also see Question \ref{ques}). 
\begin{itemize}
\item[(1)] Recall that, by definition, the canonical line bundle of a compact K\"ahler manifold $X$ is nef if $c_1(K_X)$ is a limit of a sequence of K\"ahler classes; in particular, nefness is a positivity at the level of $(1,1)$-classes, not $(1,1)$-forms (compare Remark \ref{rem1.4}). 
\item[(2)] Moreover, by the Calabi-Yau theorem \cite{Y} we know that the nefness of $K_X$ is equivalent to the fact that for any number $\epsilon>0$, there exists a K\"ahler metric $\omega_\epsilon$ such that $Ric(\omega_\epsilon)<\epsilon\hat\omega$, which may be seen as an almost nonpositivity notion for the Ricci curvature. 
\end{itemize}
From the above viewpoints, Definition \ref{defn} appears to be a very natural generalization of the nonpositivity property of the holomorphic sectional curvature that still guarantees the nefness of the canonical line bundle, and Theorem \ref{mainthm} in some sense says that \emph{the almost nonpositivity of the holomorphic sectional curvature implies the almost nonpositivity of the Ricci curvature}, providing an implicit relation between holomorphic sectional curvature and Ricci curvature in a certain approximate sense.
\end{rem}

\begin{rem}
In Theorem \ref{mainthm} we generalize Tosatti-Yang's result \cite[Theorem 1.1]{TY} in the sense of weakening the curvature condition on K\"ahler manifolds. In another direction, as proposed in \cite[Remark 1.7]{TY}, it is natural to generalize \cite{WY1,WY2,TY,DT} to Hermitian manifolds; and there are also progresses in this direction, see \cite{YZ,Ta}. 
\end{rem}

Our proof for Theorem \ref{mainthm} will be given in Section \ref{proof1}, which is based on an effective estimate for a lower bound of the existence time of the K\"ahler-Ricci flow in terms of the upper bound for the holomorphic sectional curvature of the initial metric. As a byproduct of our arguments, we also obtain a lower bound for the blow up rate of the maximum of holomorphic sectional curvature along the K\"ahler-Ricci flow with finite-time singularities, see Proposition \ref{prop4} in Section \ref{proof1}.

\subsection{$\mu_\alpha$ and nef thresholds} We would like to mention that, in the course of the proofs for Proposition \ref{prop1.3} (see \eqref{mu.nu1}) and Theorem \ref{mainthm} (see \eqref{ineq2.5}), some interesting relations between the number $\mu_\alpha$ and the \emph{nef threshold} of $\alpha$ are established. Recall the nef threshold $\nu_\alpha$ of $\alpha$ is defined by $\nu_\alpha:=\inf\{s\in\mathbb R|2\pi c_1(K_X)+s[\alpha]$ is nef$\}$. Obviously, the nef threshold is negative (resp. nonpositve) if and only if $K_X$ is ample (resp. nef). Then we have the following comparisons, which may be regarded as \emph{quantitative} results on aforementioned Yau's conjecture that the negativity of the holomorphic sectional curvature should closely affect the positivity of the canonical line bundle.
\begin{prop}
Let $X$ be a compact K\"ahler manifold of $dim_{\mathbb C}=n$ and $\alpha$ an arbitrary K\"ahler class on $X$.  
\begin{itemize}
\item[(1)] If $\mu_\alpha< 0$, then $\nu_\alpha\le\frac{n+1}{2n}\mu_\alpha$;
\item[(2)] If $\mu_\alpha\ge 0$, then $\nu_\alpha\le n\mu_\alpha$. 
\end{itemize} 
\end{prop} 
Note that $\mu_\alpha<0$ implying $\nu_\alpha< 0$ was exactly Yau's conjecture proved in \cite{WY1,TY}; then item (1) here may be regarded as a \emph{quantitative} version of their results. 

\subsection{A rigidity theorem: non-existence of rational curves} A classical result of Yau \cite{Y78} and Royden \cite{R} proved the non-existence of rational curves on compact K\"ahler manifolds admitting a K\"ahler metric with semi-negative holomorphic sectional curvature, which may be regarded as a rigidity theorem of negatively curved manifolds. Our second result extends this result to compact K\"ahler manifolds of almost nonpositive holomorphic sectional curvature.

\begin{thm}\label{mainthm_2}
A compact K\"ahler manifold of almost nonpositive holomorphic sectional curvature does not contain any rational curves.
\end{thm}

\begin{rem}
On the one hand, our proof for Theorem \ref{mainthm} does not depend on Theorem \ref{mainthm_2}; on the other hand, if $X$ is a projective manifold or a compact K\"ahler manifold of dimension $\le3$, then Cone Theorem holds \cite{Ma,HP} and Theorem \ref{mainthm_2} implies Theorem \ref{mainthm}. 
\end{rem}

\begin{rem}\label{rem_1}
There are many progresses on the structure and classification of compact K\"ahler manifolds of semi-negative holomorphic sectional curvature, see \cite{HLW2,HLWZ}. In our case, a natural question is the classification of compact K\"ahler manifolds of almost nonpositive sectional curvature, in which Theorem \ref{mainthm_2} may be useful. For example, as a consequence of Theorem \ref{mainthm_2}, \emph{if a smooth minimal model of general type is of almost nonpositive sectional curvature, then it must has ample canonical line bundle}, see \cite[Theorem 2]{Ka} (also see \cite[Lemma 2.1]{DT}, \cite[Lemma 5]{WY1}).
Moreover, by the Enriques-Kodaira classification on compact complex surfaces (see \cite[Chapter VI]{BHPV}), Kodaira's table of singular fibers of minimal elliptic surfaces (see \cite[Chapter V]{BHPV}) and Theorems \ref{mainthm} and \ref{mainthm_2}, \emph{a compact K\"ahler surface $X$ of almost nonpositive sectional curvature must be one of the followings: (1) complex torus or hyperelliptic surface, if $kod(X)=0$; (2) minimal elliptic surface with only singular fibers of type $mI_0$, if $kod(X)=1$; (3) K\"ahler surface with ample canonical line bundle, if $kod(X)=2$.} For K\"ahler surfaces of semi-negative holomorphic sectional curvature, one can find a complete structure theorem in \cite[Theorem 1.10]{HLW2}.
\end{rem}

Our proof for Theorem \ref{mainthm_2}, which will be given in Section \ref{proof2}, is achieved by using a positive lower bound for the maximum of holomorphic sectional curvature in terms of the existence of a rational curve. This is essentially due to Tosatti-Y.G. Zhang \cite[Proposition 1.4, Remark 4.1]{ToZyg}, where they proved a similar result for holomorphic \emph{bisectional} curvature.\\

\subsection{Miyaoka-Yau type inequalities} Finally, we observe that the Miyaoka-Yau inequality for Chern numbers holds on compact K\"ahler manifolds of almost nonpositive holomorphic sectional curvature. 
\begin{thm}\label{mainthm_3}
Assume $X$ is an $n$-dimensional ($n\ge2$) compact K\"ahler manifold of almost nonpositive holomorphic sectional curvature. Then the Miyaoka-Yau inequality holds:
\begin{equation}\label{MY1}
\left(\frac{2(n+1)}{n}c_2(X)-c_1(X)^2\right)(-c_1(X))^{n-2}\ge0.
\end{equation}
\end{thm}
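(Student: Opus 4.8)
The plan is to split the argument according to the numerical dimension $\nu(X)$ of $K_X$, using crucially that by Theorems \ref{mainthm} and \ref{mainthm_2} the canonical bundle $K_X$ is nef and $X$ contains no rational curve.

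If $\nu(X)=n$, then $K_X$ is nef and big, so $X$ is a smooth minimal model of general type satisfying Property (A); by Remark \ref{rem_1} (which uses Theorem \ref{mainthm_2}, \cite{Ka}) the canonical bundle $K_X$ is then ample. Hence, by the Aubin--Yau theorem, there is a K\"ahler--Einstein metric $\omega$ on $X$ with $\mathrm{Ric}(\omega)=-\omega$, whose class $[\omega]$ is a positive multiple of $c_1(K_X)=-c_1(X)$. I would then invoke the classical pointwise Chern-number inequality for K\"ahler--Einstein metrics (Guggenheimer, Berger, Chen--Ogiue; see \cite{Y78}),
\[
\left(\frac{2(n+1)}{n}c_2(\omega)-c_1(\omega)^2\right)\wedge\omega^{n-2}\ \ge\ 0\qquad\text{pointwise on }X,
\]
and integrate over $X$; since the Chern--Weil forms $c_1(\omega),c_2(\omega)$ represent $c_1(X),c_2(X)$ and $[\omega]^{n-2}$ is a positive multiple of $(-c_1(X))^{n-2}$, this is exactly \eqref{MY1}.

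If $\nu(X)<n$, then $\int_X c_1(K_X)^n=0$, so $c_1(X)^2(-c_1(X))^{n-2}=(-c_1(X))^n=c_1(K_X)^n=0$, and \eqref{MY1} reduces to the single inequality $c_2(X)\cdot c_1(K_X)^{n-2}\ge0$ (vacuous, both sides vanishing, when $c_1(K_X)^{n-2}=0$). When $X$ is projective this follows from Miyaoka's generic semipositivity theorem for minimal models: one has $c_2(X)\cdot H_1\cdots H_{n-2}\ge0$ for all ample classes $H_i$, and letting $H_i\to c_1(K_X)$ (which is nef) gives the claim by continuity of intersection numbers. For a general compact K\"ahler $X$ I would proceed analytically: fix a K\"ahler form $\omega_0$ and, for each small $t>0$, solve by Aubin--Yau the twisted K\"ahler--Einstein equation $\mathrm{Ric}(\omega_t)=-\omega_t+t\omega_0$ with $\omega_t\in c_1(K_X)+t[\omega_0]$; then
\[
c_2(X)\cdot\bigl(c_1(K_X)+t[\omega_0]\bigr)^{n-2}=\int_X c_2(\omega_t)\wedge\omega_t^{n-2},
\]
and one would bound the right-hand side from below using the Chern--Weil expression for $c_2(\omega_t)\wedge\omega_t^{n-2}$ in terms of $|R(\omega_t)|^2_{\omega_t}$, $|\mathrm{Ric}(\omega_t)|^2_{\omega_t}$ and the scalar curvature, together with the pointwise control $\mathrm{Ric}(\omega_t)\ge-\omega_t$, before letting $t\to0$.

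The main obstacle is precisely this last step in the non-projective case with $\nu(X)<n$: as $t\to0$ the K\"ahler classes $c_1(K_X)+t[\omega_0]$ degenerate to the non-K\"ahler class $c_1(K_X)$, the metrics $\omega_t$ collapse along the fibres of the reduction map, and the full curvature tensor of $\omega_t$ blows up even though its Ricci part stays bounded below; ensuring that $\int_X c_2(\omega_t)\wedge\omega_t^{n-2}$ does not become negative in the limit is the delicate point. A cleaner route, if available, would be to establish a K\"ahler analogue of Miyaoka's generic semipositivity for manifolds with nef canonical bundle, which would settle the $\nu(X)<n$ case uniformly and let the whole proof bypass the analysis of collapsing metrics.
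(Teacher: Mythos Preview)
Your handling of the $\nu(X)=n$ case matches the paper's reduction (Remark~\ref{rem_1} plus Yau \cite{Y77}), and your diagnosis of the obstacle in the $\nu(X)<n$ case is accurate---but that obstacle is exactly what the paper overcomes, and the idea you are missing is that Property~(A) must be used \emph{directly} in the analytic estimate, not merely through its corollaries (nef $K_X$, no rational curves).

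The paper works with the Chern--Weil inequality
\[
\left(\frac{2(n+1)}{n}c_2(X)-c_1(X)^2\right)\cdot[\omega]^{n-2}\ \ge\ -\frac{n+2}{4\pi^2n^2(n-1)}\int_X|\mathrm{Ric}(\omega)+\omega|_\omega^2\,\omega^n,
\]
valid for any K\"ahler metric $\omega$ (see \cite{Zyg,SWa,No18}); note this involves only $|\mathrm{Ric}(\omega)+\omega|^2$, not the full curvature tensor, so your concern about $|R(\omega_t)|$ blowing up is beside the point. For a solution of the Wu--Yau equation $\mathrm{Ric}(\omega(t))=-\omega(t)+t\omega_i$ the right-hand side becomes $t^2\int_X|\omega_i|^2_{\omega(t)}\omega(t)^n$, and the issue is to bound $\mathrm{tr}_{\omega(t)}\omega_i$ at a value of $t$ comparable to $\mu_{\alpha_i}$. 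This is precisely where Property~(A) enters: choose $\omega_i\in\alpha_i$ with $\sup_XH_{\omega_i}\le\mu_{\alpha_i}+\epsilon$; then Royden's Schwarz lemma (Proposition~\ref{prop1}), applied as in \cite[Section~2]{TY}, yields $\mathrm{tr}_{\omega_i(n\mu_{\alpha_i}+2n\epsilon)}\omega_i\le 1/\epsilon$, hence $|\omega_i|^2_{\tilde\omega_i}\le n/\epsilon^2$ for $\tilde\omega_i:=\omega_i(n\mu_{\alpha_i}+2n\epsilon)$. With $t\sim\epsilon\sim\mu_{\alpha_i}$ this makes $|\mathrm{Ric}(\tilde\omega_i)+\tilde\omega_i|^2_{\tilde\omega_i}$ bounded by a dimensional constant, and since $[\tilde\omega_i]\to 2\pi c_1(K_X)$ and $\int_X\tilde\omega_i^n\to(2\pi)^n\int_Xc_1(K_X)^n=0$, the error term tends to zero. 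Your attempt with a \emph{fixed} reference $\omega_0$ cannot produce this trace bound: the Schwarz-lemma step needs the holomorphic sectional curvature of the reference metric to be small, which is exactly the content of Property~(A). In particular, neither Miyaoka's generic semipositivity nor any appeal to full-curvature control is needed; the whole argument runs uniformly in the K\"ahler category once you feed Property~(A) into the continuity method.
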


It is known that the above Miyaoka-Yau inequality \eqref{MY1} holds in many cases; an incomplete list: surfaces of general type \cite{M77,Y77}, $K_X$ is ample \cite{Y77}, minimal manifolds of general type \cite{Ts1,Zyg,SWa}, minimal projective varieties \cite{GKPT,GT}  and compact K\"ahler manifolds whose $c_1(K_X)$ admits a smooth semi-positive representative \cite{No18}. 

\par Recall that for an $n$-dimensional compact K\"ahler manifold $X$ with nef canonical line bundle $K_X$, the \emph{numerical Kodaira dimension} $\nu(X)$ of $X$ is defined to be 
$$\nu(X):=\max\{k\in\{0,...,n\}|c_1(K_X)^k\neq[0]\}.$$
Then if $\nu(X)<n-2$, the conclusion in Theorem \ref{mainthm_3} holds trivially. To make a nontrivial conclusion, it is natural to replace $(-c_1(X))^{n-2}$ by $(-c_1(X))^{\nu(X)}\wedge\alpha^{n-\nu(X)-2}$ for some K\"ahler/nef class $\alpha$ on $X$ (compare e.g. \cite[Theorem B]{GT}). Motivated by these, we obtain the following result.
\begin{thm}\label{mainthm_4}
Assume $X$ is an $n$-dimensional compact K\"ahler manifold ($n\ge3$) with nef $K_X$ and the numerical Kodaira dimension $\nu=\nu(X)<n-2$.
\begin{itemize}
\item[(1)] If there exists a K\"ahler class $\alpha$ on $X$ with $\mu_\alpha=0$, then there holds
\begin{equation}\label{MY2}
\left(\frac{2(n+1)}{n}c_2(X)-c_1(X)^2\right)\cdot(-c_1(X))^{\nu}\cdot\alpha^{n-\nu-2}\ge0.
\end{equation}
\item[(2)] Let $\alpha_i, i=1,2,\ldots,$ be a sequence of K\"ahler classes on $X$ satisfying $\mu_{\alpha_i}>0$ and $\mu_{\alpha_i}\alpha_i\to0$ (i.e. $X$ is of almost nonpositive holomorphic sectional curvature). If $\alpha_i\to\alpha_\infty$ for some nef class $\alpha_\infty$, then there holds
\begin{equation}\label{MY3}
\left(\frac{2(n+1)}{n}c_2(X)-c_1(X)^2\right)\cdot(-c_1(X))^{\nu}\cdot\alpha_\infty^{n-\nu-2}\ge0.
\end{equation}
\end{itemize}
\end{thm}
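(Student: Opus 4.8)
The plan is to reduce both statements to an effective Miyaoka--Yau type inequality for K\"ahler metrics with a controlled upper bound on holomorphic sectional curvature, and then pass to the limit. The starting point is the classical observation (going back to Yau, and used in the references \cite{Y77, No18, GT}) that if $\omega$ is a K\"ahler metric whose holomorphic sectional curvature satisfies $\sup_X H_\omega \le \varepsilon$, then the Chern--Weil forms of the Chern--Ricci curvature of $\omega$ satisfy a pointwise inequality of the form
\begin{equation*}
\left(\frac{2(n+1)}{n}\,c_2(\omega) - c_1(\omega)^2\right)\wedge \omega^{n-2} \;\ge\; -C(n)\,\varepsilon\,\Big(\text{curvature terms}\Big)\wedge\omega^{n-2},
\end{equation*}
coming from the algebraic Miyaoka--Yau inequality for the curvature tensor together with the bound relating holomorphic sectional curvature to the full curvature. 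The key point is that the error is $O(\varepsilon)$ with a constant depending only on $n$ and on controlled quantities (volume, a lower Ricci bound along a suitable flow). For part (1), pick $\omega_j\in\alpha$ with $\sup_X H_{\omega_j}\to 0$; the error terms are controlled because $[\omega_j]=\alpha$ is fixed, so after integrating against $(-c_1(X))^{\nu}\cdot\alpha^{n-\nu-2}$ (which makes sense as $K_X$ is nef, approximating $-c_1(X)$ by K\"ahler classes) and letting $j\to\infty$ one obtains \eqref{MY2}. The hypothesis $\nu<n-2$ is what allows us to wedge with the extra factor $\alpha^{n-\nu-2}$.

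For part (2) the situation is more delicate because the metrics realizing the approximate curvature condition live in classes $\alpha_i$ that are themselves degenerating (only $\mu_{\alpha_i}\alpha_i\to 0$, with $\mu_{\alpha_i}>0$). Here I would run the K\"ahler--Ricci flow starting from a suitable metric $\omega_i\in\alpha_i$ with $\sup_X H_{\omega_i}$ close to $\mu_{\alpha_i}$, as is already done for Theorem \ref{mainthm}: the lower bound on the existence time in terms of $\sup_X H_{\omega_i}$ (the estimate underlying Section \ref{proof1}) guarantees the flow exists on a uniform interval, and standard parabolic estimates give control of the Chern--Ricci curvature forms at a fixed time. One then applies the effective Miyaoka--Yau inequality at that time and integrates against $(-c_1(X))^\nu\cdot\alpha_\infty^{n-\nu-2}$. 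Since $\alpha_i\to\alpha_\infty$ and the flow moves the class by $-t\,c_1(X)$, the cohomology classes converge, and the $O(\varepsilon)$ error is killed because $\mu_{\alpha_i}\to 0$ after the appropriate normalization forced by $\mu_{\alpha_i}\alpha_i\to 0$; passing to the limit yields \eqref{MY3}.

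The main obstacle I anticipate is making the error term in the effective Miyaoka--Yau inequality genuinely uniform along the degenerating family in part (2): one needs the curvature quantities appearing in the error (essentially integrals of $|\mathrm{Ric}|$-type terms against $\omega^{n-2}$, or their flow analogues) to stay bounded even as $\alpha_i$ collapses, which requires carefully exploiting the normalization $\mu_{\alpha_i}\alpha_i\to 0$ rather than merely $\mu_{\alpha_i}\to 0$. A secondary technical point is justifying that $(-c_1(X))^\nu\cdot\alpha_\infty^{n-\nu-2}$ is a well-defined nef (limit of K\"ahler) class against which the inequality of positive currents/forms may be integrated; this is routine given nefness of $K_X$ and of $\alpha_\infty$, using that $\nu<n-2$ so the power $\alpha_\infty^{n-\nu-2}$ is at least one, but it should be stated carefully. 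Once these uniformity issues are handled, both limits are formal.
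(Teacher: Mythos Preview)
Your proposal has a genuine gap at its starting point. The Chern--Weil/Miyaoka--Yau inequality that is actually available (equation \eqref{CW} in the paper, coming from curvature tensor decomposition) reads
\[
\Bigl(\tfrac{2(n+1)}{n}c_2(X)-c_1(X)^2\Bigr)\cdot[\omega]^{n-2}\ \ge\ -C_n\int_X|Ric(\omega)+\omega|_\omega^2\,\omega^n,
\]
an inequality of \emph{numbers} in which the factor $[\omega]^{n-2}$ is forced by the metric itself; it is not an inequality of $(2,2)$-forms that one could then ``integrate against'' an arbitrary class $(-c_1(X))^\nu\cdot\alpha^{n-\nu-2}$. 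Keeping your metrics $\omega_j$ in the fixed class $\alpha$ would therefore only yield information about $(\text{MY class})\cdot\alpha^{n-2}$, not about the intersection in \eqref{MY2}. Separately, an upper bound $\sup_X H_\omega\le\varepsilon$ gives no control whatsoever on $|Ric(\omega)+\omega|^2$ or on the full curvature tensor, so the ``$O(\varepsilon)$ error'' you describe simply does not exist.

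The paper resolves both issues by constructing the metric in a different class. Using the Wu--Yau continuity equation $Ric(\omega(t))=-\omega(t)+t\omega_0$, one produces from any $\omega_0\in\alpha$ with $\sup_X H_{\omega_0}\lesssim\varepsilon$ a K\"ahler metric $\tilde\omega_\varepsilon$ in the class $2\pi c_1(K_X)+2n\varepsilon\,\alpha$; the HSC bound enters only through the trace estimate $tr_{\tilde\omega_\varepsilon}\omega_0\le 1/\varepsilon$, which makes $|Ric(\tilde\omega_\varepsilon)+\tilde\omega_\varepsilon|^2_{\tilde\omega_\varepsilon}=|2n\varepsilon\,\omega_0|^2_{\tilde\omega_\varepsilon}\le C_n$ uniformly. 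One then applies \eqref{CW} to $\tilde\omega_\varepsilon$, expands $[\tilde\omega_\varepsilon]^{n-2}=(-2\pi c_1(X)+2n\varepsilon\alpha)^{n-2}$ binomially, uses $(-c_1(X))^k=0$ for $k>\nu$, and divides by $\varepsilon^{n-\nu-2}$: the left side converges to a positive multiple of the intersection in \eqref{MY2}, while the right side is of order $\varepsilon^{-(n-\nu-2)}\int_X\tilde\omega_\varepsilon^n=O(\varepsilon^{2})\to 0$. Part (2) is handled identically with $\mu_{\alpha_i}$ in place of $\varepsilon$ and $\alpha_i\to\alpha_\infty$; neither the K\"ahler--Ricci flow nor any direct pointwise HSC--to--MY estimate is used.
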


\begin{rem}
For convenience, let's look at a special case of above results: assume $(X,\omega)$ is a compact K\"ahler manifold of semi-negative holomorphic sectional curvature, then Theorem \ref{mainthm_3} and Theorem \ref{mainthm_4} (1) apply, and we have
$$\left(\frac{2(n+1)}{n}c_2(X)-c_1(X)^2\right)\cdot(-c_1(X))^{i}\cdot[\omega]^{j}\ge0,$$
where $i=n-2,j=0$ if $\nu\ge n-2$, and $i=\nu,j=n-\nu-2$ if $\nu<n-2$.
\end{rem}

\par Our proofs for Theorems \ref{mainthm_3} and \ref{mainthm_4} will make use of Wu-Yau's continuity equation \cite[equations (3.2)]{WY1} to construct certain family of K\"ahler metrics, see Section \ref{proof3} for details.\\

As we mentioned before, if $K_X$ is semi-positive, i.e. the canonical class $-c_1(X)$ admits a smooth semi-positive representative, then the Miyaoka-Yau inequality \eqref{MY1} holds on $X$, thanks to a recent work of Nomura \cite[theorem 1.1]{No18}. Similar to the above Theorem \ref{mainthm_4}, if $\nu<n-2$, it seems natural to extend \cite[Theorem 1.1]{No18} to certain conclusions similar to the above inequalities \eqref{MY2} and \eqref{MY3}. Here we observe a conclusion of such type under a stronger assumption.

\begin{thm}\label{mainthm_5}
Assume $X$ is an $n$-dimensional compact K\"ahler manifold ($n\ge3$) with semi-ample canonical line bundle $K_X$ and the numerical Kodaira dimension $\nu=\nu(X)<n-2$. Then for any nef class $\alpha$ on $X$ there holds
\begin{equation}\label{MY4}
\left(\frac{2(n+1)}{n}c_2(X)-c_1(X)^2\right)\cdot(-c_1(X))^{\nu}\cdot\alpha^{n-\nu-2}\ge0.
\end{equation}
\end{thm}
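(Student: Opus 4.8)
The plan is to exploit the fact that semi-ampleness of $K_X$ is much stronger than semi-positivity: it produces the Iitaka fibration, which lets one localize the inequality onto a general fibre, where it becomes the classical Bogomolov inequality on a Calabi--Yau manifold. First I would dispose of the $c_1^2$-term and of the nefness of $\alpha$. Since $K_X$ is nef and $\nu(X)=\nu$, the very definition of the numerical Kodaira dimension gives $c_1(K_X)^{\nu+1}=[0]$, hence
$$c_1(X)^2\cdot(-c_1(X))^{\nu}\cdot\alpha^{n-\nu-2}=c_1(K_X)^{\nu+2}\cdot\alpha^{n-\nu-2}=0,$$
so \eqref{MY4} is equivalent to $c_2(X)\cdot(-c_1(X))^{\nu}\cdot\alpha^{n-\nu-2}\ge0$. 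As the left-hand side is a polynomial, hence continuous, in the class occupying the $\alpha$-slot, and every nef class is a limit of K\"ahler classes, it suffices to prove this when $\alpha$ is a K\"ahler class.

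Next I would set up the fibration. Since $K_X$ is semi-ample, for $m$ sufficiently divisible $|mK_X|$ is base-point free and defines a morphism $f\colon X\to Y$, which we may take to have connected fibres, onto a normal projective variety $Y$ with $\dim Y=\kappa(X)=\nu(X)=\nu$, and $mK_X=f^*\mathcal O_Y(1)$ with $\beta:=c_1(\mathcal O_Y(1))$ ample on $Y$; thus $-c_1(X)=m^{-1}f^*\beta$ and $(-c_1(X))^{\nu}=m^{-\nu}f^*(\beta^{\nu})$. Because $\beta^{\nu}$ is a positive multiple of the point class on the $\nu$-dimensional variety $Y$, the class $(-c_1(X))^{\nu}$ is a positive multiple of the class $[F]$ of a general fibre $F=f^{-1}(y)$; choosing $y$ in the smooth locus of $Y$ over which $f$ is smooth (generic smoothness in characteristic zero), $F$ is a smooth connected projective manifold of dimension $n-\nu\ge3$. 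This identification can be made precise either through the compatibility of $f^*$ with cycle classes over a point where $f$ is flat, or concretely by intersecting $\nu$ pullbacks of general hyperplane sections of $Y$. For such an $F$ the normal bundle $N_{F/X}\cong f^*T_{Y,y}\cong\mathcal O_F^{\oplus\nu}$ is trivial, so the normal bundle sequence gives $c(T_X|_F)=c(T_F)$, in particular $c_2(X)|_F=c_2(F)$; moreover $mK_F=mK_X|_F=\big(f^*\mathcal O_Y(1)\big)|_F=\mathcal O_F$, so $K_F$ is torsion and $c_1(F)=0$. Combining, there is a positive constant $c$ with
$$c_2(X)\cdot(-c_1(X))^{\nu}\cdot\alpha^{n-\nu-2}=c\int_F c_2(F)\wedge\big(\alpha|_F\big)^{n-\nu-2},$$
where $\alpha|_F$ is a K\"ahler class on $F$.

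Finally I would conclude on $F$. Since $F$ is compact K\"ahler with $c_1(F)=0$, Yau's theorem provides a Ricci-flat K\"ahler metric $\omega_F\in\alpha|_F$. Representing $c_1(F)$ and $c_2(F)$ by the Chern--Weil forms of $\omega_F$ and applying the Bogomolov inequality for the Hermitian--Einstein bundle $(T_F,\omega_F)$, namely $\int_F\big(2(n-\nu)c_2(\omega_F)-(n-\nu-1)c_1(\omega_F)^2\big)\wedge\omega_F^{n-\nu-2}\ge0$ (with pointwise strengthening due to Kobayashi and L\"ubke), together with $\mathrm{Ric}(\omega_F)=0$, i.e. $c_1(\omega_F)=0$ as a form, one obtains $\int_F c_2(F)\wedge(\alpha|_F)^{n-\nu-2}=\int_F c_2(\omega_F)\wedge\omega_F^{n-\nu-2}\ge0$. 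Feeding this into the last display and recalling the vanishing of the $c_1^2$-term yields \eqref{MY4}. I expect the main obstacle to lie not in the differential geometry but in the algebro-geometric bookkeeping of the middle step: pinning down $(-c_1(X))^{\nu}$ as a positive multiple of a general fibre class and the identity $c_2(X)|_F=c_2(F)$, which force one to work over a point where $f$ is flat and which lies in the smooth locus of the possibly singular base $Y$; the rest is either elementary or a direct appeal to Yau's theorem and the classical Bogomolov inequality.
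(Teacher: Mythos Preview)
Your argument is correct, but it is genuinely different from the paper's proof. The paper does \emph{not} localize to a fibre of the Iitaka map. Instead it fixes a K\"ahler class $\alpha$, runs the normalized K\"ahler--Ricci flow $\partial_t\omega=-\mathrm{Ric}(\omega)-\omega$ from some $\omega_0\in\alpha$, and uses the Chern--Weil type pointwise inequality
\[
\left(\frac{2(n+1)}{n}c_2(X)-c_1(X)^2\right)\cdot[\omega(t)]^{n-2}\ge -C_n\int_X|\mathrm{Ric}(\omega(t))+\omega(t)|_{\omega(t)}^2\,\omega(t)^n.
\]
After multiplying by $e^{(n-\nu-2)t}$ and expanding $[\omega(t)]=-2\pi(1-e^{-t})c_1(X)+e^{-t}\alpha$, the left side converges to a positive multiple of the desired quantity, while the right side is shown to tend to $0$ by manipulating the evolution of the scalar curvature and invoking Song--Tian's uniform scalar curvature bound along the flow (this is the only place semi-ampleness is used). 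Your route instead uses semi-ampleness to produce the fibration, reduces to a general fibre $F$ with $c_1(F)=0$, and concludes with Yau's Ricci-flat metric plus the Bogomolov--L\"ubke inequality. This is arguably more elementary in that it avoids the Song--Tian estimate, and it makes the geometry transparent (and is close in spirit to the algebraic approaches in the references on minimal varieties); the paper's approach, by contrast, stays purely analytic, is uniform with its proofs of the other Miyaoka--Yau type results via continuity/flow methods, and does not require any cycle-theoretic identification of $(-c_1(X))^\nu$ with a fibre class on a possibly singular base. Your own caveat is well placed: the only point needing care in your argument is justifying $f^*(\beta^\nu)=(\deg Y)[F]$ and $c_2(X)|_F=c_2(F)$ over a smooth point of $Y$ where $f$ is smooth, which works in the K\"ahler setting via generic smoothness and the trivial normal bundle of such a fibre.
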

Here we have assumed semi-ampleness of $K_X$, which by definition is stronger than semi-positivity of $K_X$, as well as nefness of $K_X$ (but the Abundance Conjecture predicts semi-ampleness of $K_X$ is equivalent to nefness of $K_X$). Also note that when $K_X$ is semi-ample, the numerical Kodaira dimension $\nu(X)$ equals to the Kodaira dimension of $X$. 
\par We should mention that, for a minimal \emph{projective} manifold $X$ with $\nu(X)<n-2$ and an arbitrary \emph{rational} K\"ahler class $\alpha$ on $X$, the inequality \eqref{MY4} is always true, see \cite[Theorem B]{GKPT}; then our Theorem \ref{mainthm_5} may be regarded as a partial extension of this result to the K\"ahler case. It is natural to expect that Theorem \ref{mainthm_5} should be extended to any minimal compact K\"ahler manifold with $\nu(X)<n-2$.
\par Since a nef class is a limit of a sequence of K\"ahler classes, to prove Theorem \ref{mainthm_5} it suffices to check \eqref{MY4} for any K\"ahler class $\alpha$ on $X$, which can be achieved by certain arguments due to Y.G. Zhang \cite{Zyg} using the K\"ahler-Ricci flow, see Section \ref{proof3} for details. \\

\subsection{Organization} In Section \ref{pre} a useful result of Royden will be recalled. In Section \ref{mu} we will show that for any K\"ahler class $\alpha$, $\mu_\alpha$ in Definition \ref{defn1} is well-defined. Theorems \ref{mainthm} and \ref{mainthm_2} will be proved in Sections \ref{proof1} and \ref{proof2}, respectively; Theorems \ref{mainthm_3}, \ref{mainthm_4} and \ref{mainthm_5} will be proved in Section \ref{proof3}. In Section \ref{properties}, we will present several remarks on almost nonpositive holomorphic sectional curvature. In particular we will explain why the compact K\"ahler manifolds of almost nonpositive holomorphic sectional curvature may be far from admitting a K\"ahler metric of semi-negative holomorphic sectional curvature (see Discussion \ref{disc}).

\section{Preliminaries}\label{pre}

We first recall a result of Royden \cite[Lemma, page 552]{R}:

\begin{prop}\label{prop1}\cite{R}
Let $(X,\hat\omega)$ be a K\"ahler manifold and $\kappa_{\hat\omega}(x):=\sup_{\xi\in T_x^{1,0}X\setminus\{0\}}H_{\hat\omega}(x,\xi)$. Then for any K\"ahler metric $\omega$ on $X$ we have
\begin{itemize}
\item[(1)] if $\kappa_{\hat\omega}(x)\le A$ for some number $A>0$, then at $x$,
$$g^{\bar ji}g^{\bar qp}\hat R_{i\bar jp\bar q}\le A(tr_{\omega}\hat\omega)^2;$$
\item[(2)] if $\kappa{\hat\omega}(x)\le A$ for some number $A\le0$, then at $x$, 
$$g^{\bar ji}g^{\bar qp}\hat R_{i\bar jp\bar q}\le A\frac{n+1}{2n}(tr_{\omega}\hat\omega)^2.$$
\end{itemize}
Here $\omega=\sqrt{-1}g_{i\bar j}dz^i\wedge d\bar z^j$ and $\hat R$ is the curvature tensor of $\hat\omega$.
\end{prop}

\begin{proof}
For convenience we recall here a proof due to Royden. We only check item (1). It suffices to show that, for any given point $x\in X$ and $\xi_1,\ldots,\xi_n$ a basic of $T^{1,0}_x(X)$ which is orthonormal with respect to $\omega$, there holds
\begin{equation}\label{R}
\sum_{1\le\alpha,\beta\le n}\hat R(\xi_\alpha,\bar\xi_\alpha,\xi_\beta,\bar\xi_\beta)\le A\left(\sum_{1\le\alpha\le n}|\xi_\alpha|^2\right)^2.
\end{equation} 
Recall that \cite[page 552]{R} shows, if $\sup_XH_\omega\le A$ for any fixed $A\in\mathbb R$, there holds
\begin{equation}\label{R1}
\sum_{1\le\alpha,\beta\le n}\hat R(\xi_\alpha,\bar\xi_\alpha,\xi_\beta,\bar\xi_\beta)\le \frac{1}{2}A\left(\left(\sum_{1\le\alpha\le n}|\xi_\alpha|^2\right)^2+\sum_{1\le\alpha\le n}|\xi_\alpha|^4\right).
\end{equation} 
Therefore, when $A>0$, by using an easy inequality 
$$\sum_{1\le\alpha\le n}|\xi_\alpha|^4\le \left(\sum_{1\le\alpha\le n}|\xi_\alpha|^2\right)^2,$$ 
we immediately conclude \eqref{R} from \eqref{R1}.
\par Proposition \ref{prop1} is proved.
\end{proof}

\begin{rem}
Item (2) in Proposition \ref{prop1} is very useful in solving Yau's conjecture, see \cite{WY1,WY2,TY,DT}. However, in proving our main theorems (in which we don't assume any pointwise sign for holomorphic sectional curvature), we will need $A>0$ case, i.e. item (1) in Proposition \ref{prop1}.
\end{rem}

\section{$\mu_\alpha$ is well-defined}\label{mu}
In this section we show $\mu_\alpha$ in Definition \ref{defn1} is well-defined for any K\"ahler class $\alpha$ on $X$. To this end, we need the following

\begin{lem}\label{keylem}
Let $X$ be a compact K\"ahler manifold and $\chi$ a smooth nonpositive closed real $(1,1)$-form on $X$. 
\begin{itemize}
\item[(1)] If there exists a K\"ahler metric $\hat\omega$ on $X$ such that $\kappa_{\hat\omega}\hat\omega\le\chi$ on $X$, then $2\pi c_1(K_X)+\frac{n+1}{2n}[\chi]$ is nef.
\item[(2)] If there exists a K\"ahler metric $\hat\omega$ on $X$ such that $\kappa_{\hat\omega}\hat\omega\le\chi$ on $X$ and $\kappa_{\hat\omega}\hat\omega<\chi$ at some point $x_0\in X$, then $2\pi c_1(K_X)+\frac{n+1}{2n}[\chi]$ is nef and big.
\item[(3)] If there exists a K\"ahler metric $\hat\omega$ on $X$ such that $\kappa_{\hat\omega}\hat\omega<\chi$ on $X$, then $2\pi c_1(K_X)+\frac{n+1}{2n}[\chi]$ is a K\"ahler class.
\end{itemize}
\end{lem}

\begin{rem}
These conclusions may be regarded as certainly \emph{quantitative} or \emph{ twisted} versions of results in \cite{WY1,WY2,TY,DT}. Indeed, the proofs here are simple modifications of those in \cite{WY1,WY2,TY,DT}. 
\end{rem}

\begin{proof}
Fix a K\"ahler metric $\hat\omega$ with $\kappa_{\hat\omega}\hat\omega\le\chi$. Consider the continuity method (which is a twisted version of Wu-Yau's \cite[equations (3.2)]{WY1})
\begin{equation}\label{conti.meth.}
Ric(\omega(t))=-\omega(t)+\frac{n+1}{2n}\chi+t\hat\omega.
\end{equation}
We have
\begin{align}
\Delta_{\omega(t)}\log tr_{\omega(t)}\hat\omega&\ge\frac{1}{tr_{\omega(t)}\hat\omega}\left(g^{\bar li}g^{\bar jk}\hat g_{k\bar l}R_{i\bar j}-g^{\bar ji}g^{\bar lk}\hat R_{i\bar jk\bar l}\right)\nonumber\\
&\ge\frac{1}{tr_{\omega(t)}\hat\omega}\left(g^{\bar li}g^{\bar jk}\hat g_{k\bar l}(-g_{i\bar j}+\frac{n+1}{2n}\chi_{i\bar j}+t\hat g_{i\bar j})-\frac{n+1}{2n}\kappa_{\hat\omega}(tr_{\omega(t)}\hat\omega)^2\right)\nonumber\\
&\ge \frac{n+1}{2n}tr_{\omega(t)}(\chi-\kappa_{\hat\omega}\hat\omega)+\frac tntr_{\omega(t)}\hat\omega-1\nonumber,
\end{align}
where in the first inequality we have used an inequality of Yau \cite{Y}, in the second inequality we have used the continuity equation \eqref{conti.meth.} and Proposition \ref{prop1}(2), and in the last inequality we have used (in which the condition $\chi\le0$ is needed)
$$g^{\bar li}g^{\bar jk}\hat g_{k\bar l}\chi_{i\bar j}\ge (tr_{\omega(t)}\hat\omega)\cdot(tr_{\omega(t)}\chi).$$
Set $M(t):=\frac{\frac{n+1}{2n}tr_{\omega(t)}(\chi-\kappa_{\hat\omega}\hat\omega)}{tr_{\omega(t)}\hat\omega}$, which is a nonnegative uniformly bounded function since $0\le\chi-\kappa_{\hat\omega}\hat\omega\le C\hat\omega$ for some positive constant $C$. Then
\begin{align}\label{Sineq}
\Delta_{\omega(t)}\log tr_{\omega(t)}\hat\omega&\ge\left(M(t)+\frac tn\right)tr_{\omega(t)}\hat\omega-1.
\end{align}
Now, using the maximum principle arguments as in \cite{Y,WY1,TY} one gets that there exists a unique smooth family of K\"ahler metrics $\omega(t)$ for all $t>0$ solving the continuity method \eqref{conti.meth.}. Therefore, $2\pi c_1(K_X)+\frac{n+1}{2n}[\chi]+t[\hat\omega]$ is a K\"ahler class for all $t>0$, i.e. $2\pi c_1(K_X)+\frac{n+1}{2n}[\chi]$ is a nef class. Item (1) is proved.\\

\par Next we check item (2) by simply modifying arguments in \cite{DT}. Fix a K\"ahler metric $\hat\omega$ on $X$ such that $\kappa_{\hat\omega}\hat\omega\le\chi$ on $X$ and $\kappa_{\hat\omega}\hat\omega<\chi$ at some point $x_0\in X$. By item (1) we have a unique solution $\omega(t)$, $t\in(0,1]$, to the continuity method \eqref{conti.meth.}. Define the continuous function $M(t)$ on $X\times(0,1]$ as above. We may also choose an open neighborhood $U$ of $x_0$ and a positive constant $\delta$ such that $M(t)\ge\delta$ on $U\times(0,1]$. 
\par Next we reduce \eqref{conti.meth.} to a complex Monge-Amp\`ere equation:
\begin{equation}
(t\hat\omega-Ric(\hat\omega)+\frac{n+1}{2n}\chi+\sqrt{-1}\partial\bar\partial\varphi(t))^n=e^{\varphi(t)}\hat\omega^n,
\end{equation}
where $\omega(t)=t\hat\omega-Ric(\hat\omega)+\frac{n+1}{2n}\chi+\sqrt{-1}\partial\bar\partial\varphi(t)$ is the solution to the continuity method \eqref{conti.meth.}.
By maximum principle, we see $\varphi(t)$ is uniformly bounded from above on $X\times(0,1]$.

\textbf{Claim:} there exists a time sequence $t_i\to0^+$ such that $\sup_X\varphi(t_i)$ is uniformly bounded from below by a finite number. 
\par To see this claim, let's first set $\tilde\varphi(t):=\varphi(t)-\sup_X\varphi(t)$. Note that for some fixed large constant $A\ge1$, $\tilde\varphi(t)$ is $A\hat\omega$-plurisubharmonic function with $\sup_X\tilde\varphi(t)=0$ for all $t>0$, implying that for some time sequence $t_i\to0^+$, $\tilde\varphi(t_i)$ converges in $L^1$-topology to an $A\hat\omega$-plurisubharmonic function $\tilde\varphi_0$ which is not identically $-\infty$. Then $\int_Xe^{\tilde\varphi_0}\hat\omega^n>0$; moreover, since $\{\tilde\varphi_0=-\infty\}$ is a set of measure zero, we also have $\int_Ue^{\tilde\varphi_0}\hat\omega^n>0$. Therefore, 
\begin{align}
\frac{\int_XM(t_i)\omega(t_i)^n}{\int_X\omega(t_i)^n}&=\frac{\int_XM(t_i)e^{\varphi(t_i)}\hat\omega^n}{\int_Xe^{\varphi(t_i)}\hat\omega^n}\nonumber\\
&=\frac{\int_XM(t_i)e^{\tilde\varphi(t_i)}\hat\omega^n}{\int_Xe^{\tilde\varphi(t_i)}\hat\omega^n}\nonumber\\
&\ge\frac{\delta\int_Ue^{\tilde\varphi(t_i)}\hat\omega^n}{\int_Xe^{\tilde\varphi(t_i)}\hat\omega^n}\nonumber\\
&\to\frac{\delta\int_Ue^{\tilde\varphi_0}\hat\omega^n}{\int_Xe^{\tilde\varphi_0}\hat\omega^n}
\nonumber\\
&>0\nonumber,
\end{align}
Also, obviously $\frac{\int_XM(t_i)e^{\varphi(t_i)}\hat\omega^n}{\int_Xe^{\varphi(t_i)}\hat\omega^n}$ is uniformly bounded from above. In conclusion, there exists a constant $C\ge1$ such that for every $t_i$ there holds
\begin{equation}\label{ineq3.4}
C^{-1}\le\frac{\int_XM(t_i)e^{\varphi(t_i)}\hat\omega^n}{\int_Xe^{\varphi(t_i)}\hat\omega^n}\le C.
\end{equation}

On the other hand, by Cauchy-Schwarz inequality, $tr_{\omega(t)}\hat\omega\ge n\left(\frac{\hat\omega^n}{\omega(t)^n}\right)^{\frac1n}=ne^{-\frac{\varphi(t)}{n}}$, and so 
$$\inf_Xtr_{\omega(t)}\hat\omega\ge ne^{-\frac{\sup_X\varphi(t)}{n}}.$$
Now integrating \eqref{Sineq} with respect to $\omega(t)^n=e^{\varphi(t)}\hat\omega^n$ gives
\begin{align}
\int_Xe^{\varphi(t)}\hat\omega^n &\ge\int_X\left(M(t)+\frac{t}{n}\right)tr_{\omega(t)}\hat\omega \cdot e^{\varphi(t)}\hat\omega^n\nonumber\\
&\ge ne^{-\frac{\sup_X\varphi(t)}{n}}\cdot\int_XM(t)e^{\varphi(t)}\hat\omega^n\nonumber,
\end{align}
and hence by \eqref{ineq3.4},
$$e^{\frac{\sup_X\varphi(t_i)}{n}}\ge n\frac{\int_XM(t_i)e^{\varphi(t_i)}\hat\omega^n}{\int_Xe^{\varphi(t_i)}\hat\omega^n}\ge\delta'$$
for some constant $\delta'>0$. The Claim follows.
\par Having the above Claim, we easily conclude that, up to passing to a subsequence of $t_i$, $\varphi(t_i)$ converges in $L^1$-topology to an $A\hat\omega$-plurisubharmonic function $\varphi_0$ which is not identically $-\infty$, and 
$$\int_X\left(2\pi c_1(K_X)+\frac{n+1}{2n}[\chi]\right)^n=\lim_{i\to\infty}\int_X\omega(t_i)^n=\lim_{i\to\infty}\int_Xe^{\varphi(t_i)}\hat\omega^n=\int_Xe^{\varphi_0}\hat\omega^n>0.$$
By applying \cite[Theorem 0.5]{DP}, item (2) follows.\\

\par For item (3), one observe that under the current assumption, there holds $\omega(t)\ge\delta\hat\omega$ on $X\times(0,1]$ for some constant $\delta>0$ (applying the maximum principle in \eqref{Sineq}), and so by \cite[Theorem 0.1]{DP} $2\pi c_1(K_X)+\frac{n+1}{2n}[\chi]$ is a K\"ahler class (or one can derive uniform higher order estimates for $\omega(t)$ and then show, as $t\to0$, $\omega(t)$ subsequently converges to a smooth K\"ahler metric in the class $2\pi c_1(K_X)+\frac{n+1}{2n}[\chi]$, see \cite{Y,WY1,WY2,TY}).
\end{proof}

\begin{rem}
Consider the equation on $X$:
\begin{equation}
\left(-Ric(\hat\omega)+\frac{n+1}{2n}\chi+\sqrt{-1}\partial\bar\partial u\right)^n=e^{u}\hat\omega^n.
\end{equation}
When $2\pi c_1(K_X)+\frac{n+1}{2n}[\chi]$ is nef and big, it admits a unique solution $u$ of minimal singularities \cite{BEGZ}. Set $\omega:=-Ric(\hat\omega)+\frac{n+1}{2n}\chi+\sqrt{-1}\partial\bar\partial u$, which is a K\"ahler current on $X$ and a smooth K\"ahler metric on $Amp(2\pi c_1(K_X)+\frac{n+1}{2n}[\chi])$ (the ample locus of $2\pi c_1(K_X)+\frac{n+1}{2n}[\chi]$) and satisfies
$$Ric(\omega)=-\omega+\frac{n+1}{2n}\chi,$$ 
which holds on $X$ in the current sense and on $Amp(2\pi c_1(K_X)+\frac{n+1}{2n}[\chi])$ in the smooth sense. Therefore, items (2) and (3) somehow indicate a \emph{quantitative} relation between holomorphic sectional curvature and Ricci curvature, i.e. ``a nonpositive upper bound for holomorphic sectional curvature" (in the sense of assumptions in items (2) and (3) of Lemma \ref{keylem}) implies the same (up to a constant $\frac{n+1}{2n}$) nonpositive upper bound for Ricci curvature. 
\end{rem}

\begin{rem}
We may naturally ask: can one remove nonpositivity assumption on $\chi$ in Lemma \ref{keylem} and get the same conclusions (up to changing the constant factor $\frac{n+1}{2n}$)? 
\end{rem}

Now we are ready to prove the main result in this section.
\begin{prop}\label{prop.mu}
For any K\"ahler class $\alpha$ on $X$, $\mu_\alpha$ in Definition \ref{defn1} is well-defined, i.e. $\mu_\alpha>-\infty$.
\end{prop}
\begin{proof}
Given a K\"ahler class $\alpha$ on $X$, we may assume $\mu_\alpha<0$ (otherwise, we are done).
We let $\nu_\alpha:=\inf\{s\in\mathbb R|2\pi c_1(K_X)+s[\alpha]$ is nef$\}$. Of course $\nu_\alpha>-\infty$. For any K\"ahler metric $\omega\in\alpha$ of negative holomorphic sectional curvature, there holds $\kappa_\omega\omega\le(\sup_XH_\omega)\cdot\omega$. Then we apply Lemma \ref{keylem}(1) with $\chi=(\sup_XH_\omega)\cdot\omega$ to see that $2\pi c_1(K_X)+\frac{n+1}{2n}(\sup_XH_\omega)\cdot\alpha$ is nef, implying
$$\sup_XH_\omega\ge\frac{2n}{n+1}\nu_\alpha,$$
and so
\begin{equation}\label{mu.nu1}
\mu_\alpha\ge\frac{2n}{n+1}\nu_\alpha.
\end{equation}
This proposition is proved.
\end{proof}

\begin{rem}[$\mu_\alpha\alpha$ is ``uniformly bounded from below"]
Let $X$ be a compact K\"ahler manifold. There exists a K\"ahler class $\alpha_0$ on $X$ such that for any K\"ahler class $\alpha$, $\mu_\alpha\alpha+\alpha_0$ is a K\"ahler class. Namely, $\mu_\alpha\alpha$ is ``uniformly bounded from below" in $H^{1,1}(X,\mathbb R)$. To see this, we may assume there exists a K\"ahler class $\alpha$ of $\mu_\alpha<0$. Then $2\pi c_1(K_X)$ is a K\"ahler class and the arguments in Proposition \ref{prop.mu} imply that 
$$\mu_\alpha\alpha+\left(\frac{4\pi n}{n+1}+1\right)c_1(K_X)$$
is a K\"ahler class. Our claim follows.
\end{rem}

\begin{rem}[An alternative proof of Proposition \ref{prop.mu}]
We have an alternative proof for Proposition \ref{prop.mu}, which is somehow more direct but misses the effective comparison \eqref{mu.nu1}. Recall a well-known result of Berger (see e.g. \cite[page 189, Exercise 16]{Z}): for every K\"ahler metric $\omega\in\alpha$, its scalar curvature $S^\omega$ satisfies 
\begin{equation}
\int_{\mathbb{CP}^{n-1}}H_\omega(x)\omega_{FS}^{n-1}=\frac{2}{n(n+1)}S^\omega(x)\nonumber
\end{equation}
for every $x\in X$, here $H_\omega(x):=H_\omega(x,\cdot)$ has been regarded as a function on $\mathbb{CP}^{n-1}$. Note that
\begin{equation}
\int_{X}S^\omega\omega^n=\int_XnRic(\omega)\wedge\omega^{n-1}=-2\pi nc_1(K_X)\cdot\alpha^{n-1}\nonumber.
\end{equation}
Combining the above two identities gives ($c_n$ is a positive constant depending only on $n$)
$$\sup_XH_\omega\ge-c_n\cdot\frac{2\pi c_1(K_X)\cdot\alpha^{n-1}}{\alpha^n},$$
and so

\begin{equation}\label{lowerbd}
\mu_\alpha\ge-c_n\cdot\frac{2\pi c_1(K_X)\cdot\alpha^{n-1}}{\alpha^n},
\end{equation}
proving Proposition \ref{prop.mu}. We may also mention that up to the constant factor depending on dimension of $X$, \eqref{lowerbd} is in fact implied by our above comparison \eqref{mu.nu1}, since we easily have
$$\nu_\alpha\ge-\frac{2\pi c_1(K_X)\cdot\alpha^{n-1}}{\alpha^n}.$$
Therefore, \eqref{lowerbd} can derived without using Berger's result.
\end{rem}

\section{Nefness of the canonical line bundle}\label{proof1}
To see Theorem \ref{mainthm}, we first prove the following general proposition, which provides an effective way to estimate a lower bound for the existence time of the K\"ahler-Ricci flow in terms of the upper bound for the holomorphic sectional curvature of the initial metric. 

\begin{prop}\label{prop2}
Let $(X,\hat\omega)$ be an $n$-dimensional compact K\"ahler manifold and $A=\sup_XH_{\hat\omega}$. Assume $A>0$. Then the K\"ahler-Ricci flow running from $\hat\omega$, 
\begin{equation}\label{KRF0}
\left\{
\begin{aligned}
\partial_t\omega(t)&=-Ric(\omega(t))\\
\omega(0)&=\hat\omega,
\end{aligned}
\right.
\end{equation}
exists a smooth solution on $X\times[0,\frac{1}{nA})$.
\end{prop}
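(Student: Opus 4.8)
The plan is to use the standard characterization of the maximal existence time of the (normalized-free) Kähler-Ricci flow: the flow $\omega(t) = \hat\omega - t\,\mathrm{Ric}(\hat\omega) + \sqrt{-1}\partial\bar\partial\varphi(t)$ exists precisely as long as the reference class $[\hat\omega] - t\,c_1(X) = [\hat\omega] + t\,c_1(K_X)$ stays Kähler; equivalently, writing $\hat\chi$ for a fixed smooth representative of $c_1(K_X)$ (for instance $-\mathrm{Ric}(\hat\omega)$), the maximal time is
\begin{equation*}
T_{\max} = \sup\{t > 0 : [\hat\omega] + t\,c_1(K_X) \text{ is a K\"ahler class}\}.
\end{equation*}
So it suffices to show that $[\hat\omega] + t\,c_1(K_X)$ is Kähler for all $t \in [0, \tfrac{1}{nA})$, which by the openness of the Kähler cone reduces to producing, for each such $t$, a smooth strictly positive $(1,1)$-form in that class. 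First I would take the class $\tfrac{1}{t}[\hat\omega] + c_1(K_X) = \tfrac{1}{t}[\hat\omega] - c_1(X)$ and try to represent it by a positive form built from $\hat\omega$ and its Ricci curvature.

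The key step is a pointwise inequality bounding $\mathrm{Ric}(\hat\omega)$ from above in terms of $\hat\omega$, coming from the holomorphic sectional curvature bound. The Ricci curvature is a trace of the full curvature tensor, and Proposition 2.1 (Royden's lemma) controls exactly such traces: applied at a point $x$ with $\hat\omega = \omega$ (i.e. taking the auxiliary metric in Proposition 2.1 to be $\hat\omega$ itself), one gets $\hat g^{\bar j i}\hat g^{\bar q p} R^{\hat\omega}_{i\bar j p\bar q} \le A(\mathrm{tr}_{\hat\omega}\hat\omega)^2 = An^2$; more relevantly, contracting only once and exploiting that the scalar-type trace dominates the Ricci eigenvalues, one should extract
\begin{equation*}
\mathrm{Ric}(\hat\omega) \le nA\,\hat\omega
\end{equation*}
as $(1,1)$-forms on all of $X$. (Here one must be a little careful: Royden's inequality as stated bounds the \emph{sum over an orthonormal frame} of sectional curvatures; to get a bound on $\mathrm{Ric}$ as a Hermitian form one applies \eqref{R1}, or rather the version for general frames, choosing the frame adapted to the Ricci form so that its off-diagonal terms drop, and uses that each diagonal entry of the curvature appears with the right sign.) Granting this, we have, for any $t \in [0, \tfrac{1}{nA})$,
\begin{equation*}
\hat\omega - t\,\mathrm{Ric}(\hat\omega) \ge \hat\omega - tnA\,\hat\omega = (1 - tnA)\hat\omega > 0,
\end{equation*}
so $[\hat\omega] + t\,c_1(K_X) = [\hat\omega - t\,\mathrm{Ric}(\hat\omega)]$ is a Kähler class. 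Therefore $T_{\max} \ge \tfrac{1}{nA}$, and by Tian-Zhang's theorem on the maximal existence time of the Kähler-Ricci flow the smooth solution exists on $X \times [0, \tfrac{1}{nA})$.

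The main obstacle I anticipate is the passage from Royden's trace inequality \eqref{R1} — which is phrased as a bound on $\sum_{\alpha,\beta} R^{\hat\omega}(\xi_\alpha,\bar\xi_\alpha,\xi_\beta,\bar\xi_\beta)$ for an $\hat\omega$-orthonormal frame, i.e. a bound on a \emph{doubly-traced} quantity — to the \emph{eigenvalue-wise} bound $\mathrm{Ric}(\hat\omega) \le nA\,\hat\omega$ that the flow argument needs. The clean way around this is: fix $x \in X$, choose an $\hat\omega$-unitary frame diagonalizing $\mathrm{Ric}(\hat\omega)$ at $x$ with eigenvalues $\lambda_1 \ge \cdots \ge \lambda_n$, so $\lambda_k = \sum_\beta R^{\hat\omega}(\xi_k,\bar\xi_k,\xi_\beta,\bar\xi_\beta)$; then applying Royden's pointwise estimate not to the full frame but to suitable sub-configurations (or simply noting that the holomorphic sectional curvature bound $H_{\hat\omega} \le A$ already gives $R^{\hat\omega}(\xi,\bar\xi,\xi,\bar\xi) \le A|\xi|^4$ and then summing the polarization identity over a frame) yields $\lambda_k \le nA$ for the top eigenvalue. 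I would double-check the precise constant here, since it is exactly the constant that produces the existence time $\tfrac{1}{nA}$; a slightly lossy argument would give $\tfrac{1}{CnA}$ for some $C>1$, and getting the sharp $n$ is the delicate point. Everything else — openness of the Kähler cone, the Tian-Zhang existence-time formula, positivity of $(1-tnA)\hat\omega$ — is standard.
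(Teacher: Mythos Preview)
Your proposal has a genuine gap at precisely the point you flagged as the ``main obstacle'': the pointwise bound $\mathrm{Ric}(\hat\omega)\le nA\,\hat\omega$ does \emph{not} follow from $\sup_X H_{\hat\omega}\le A$, and in fact no bound of the form $\mathrm{Ric}(\hat\omega)\le C(n)A\,\hat\omega$ is available. Royden's inequality \eqref{R1} controls the \emph{doubly}-traced quantity $\hat g^{\bar ji}\hat g^{\bar qp}R^{\hat\omega}_{i\bar jp\bar q}$, but a single trace---the Ricci form---cannot be bounded this way, because the bisectional curvatures $R(\xi,\bar\xi,\eta,\bar\eta)$ for orthogonal unit $\xi,\eta$ can be made large while keeping $\sup H$ fixed, simply by driving $H(\eta)$ very negative. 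Concretely, in dimension $2$ take the algebraic K\"ahler curvature tensor with $R_{1\bar 1 1\bar 1}=1$, $R_{1\bar 1 2\bar 2}=B$, $R_{2\bar 2 2\bar 2}=-M$, $R_{1\bar 2 1\bar 2}=0$ (and the off-diagonal $R_{1\bar 1 1\bar 2}$-type terms zero). A direct computation gives $\sup H = 1+\frac{(2B-1)^2}{M+4B-1}$, while $\mathrm{Ric}_{1\bar 1}=1+B$; for any fixed $B$, letting $M\to\infty$ keeps $\sup H$ arbitrarily close to $1$ while $\mathrm{Ric}_{1\bar 1}=1+B$ is as large as you like. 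So this is not a delicacy of constants---the inequality simply fails, and with it the conclusion that $\hat\omega-t\,\mathrm{Ric}(\hat\omega)>0$ on $[0,\tfrac{1}{nA})$.

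The paper's proof bypasses this entirely with a dynamic argument. Rather than bounding $\mathrm{Ric}(\hat\omega)$ pointwise, one runs the flow and applies Royden's lemma (Proposition~\ref{prop1}) with the \emph{evolving} metric $\omega(t)$ playing the role of the auxiliary metric $\hat\omega$ there; this is exactly the Schwarz-lemma setup and yields
\[
(\partial_t-\Delta_{\omega(t)})\,\mathrm{tr}_{\omega(t)}\hat\omega \;\le\; A\,(\mathrm{tr}_{\omega(t)}\hat\omega)^2.
\]
The maximum principle, together with $\mathrm{tr}_{\omega(0)}\hat\omega=n$, gives $\sup_X\mathrm{tr}_{\omega(t)}\hat\omega\le \frac{n}{1-nAt}$ for $t<\tfrac{1}{nA}$. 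From this uniform trace bound one derives a volume-form upper bound via the evolution of $\partial_t\varphi$, hence two-sided metric equivalence $C^{-1}\hat\omega\le\omega(t)\le C\hat\omega$ on any $[0,T)$ with $T<\tfrac{1}{nA}$, and standard higher-order estimates then extend the flow past $T$. The moral is that Royden's inequality is tailored to the doubly-contracted Schwarz quantity $\mathrm{tr}_{\omega(t)}\hat\omega$, not to the Ricci form of the fixed metric $\hat\omega$.
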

\begin{proof}
The K\"ahler-Ricci flow \eqref{KRF0} is equivalent to the following parabolic Monge-Amp\`ere equation:
\begin{equation}\label{KRF1}
\left\{
\begin{aligned}
\partial_t\varphi(t)&=\log\frac{(\hat\omega-tRic(\hat\omega)+\sqrt{-1}\partial\bar\partial\varphi(t))^n}{\hat\omega^n}\\
\varphi(0)&=0,
\end{aligned}
\right.
\end{equation}
where $\omega(t)=\hat\omega-tRic(\hat\omega)+\sqrt{-1}\partial\bar\partial\varphi(t)$ is the solution to the K\"ahler-Ricci flow \eqref{KRF0}. A direct computation gives
\begin{align}\label{trace}
(\partial_t-\Delta_{\omega(t)})tr_{\omega(t)}\hat\omega&=g(t)^{\bar ji}g(t)^{\bar lk}\hat R_{i\bar jk\bar l}-g(t)^{\bar ji}g(t)^{\bar lk}\hat g^{\bar ba}\nabla^{g(t)}_i\hat g_{k\bar b}\nabla^{g(t)}_{\bar j}\hat g_{a\bar q}\nonumber\\
&\le g(t)^{\bar ji}g(t)^{\bar lk}\hat R_{i\bar jk\bar l}\nonumber\\
&\le A(tr_{\omega(t)}\hat\omega)^2,
\end{align}
where we have used Proposition \ref{prop1}(1) in the last inequality. 
\par Now we assume the K\"ahler-Ricci flow \eqref{KRF0} exists a maximal solution for $t\in[0,T)$ and assume Proposition \ref{prop2} fails, i.e. $T<\frac{1}{nA}$. Set $M(t):=\sup_Xtr_{\omega(t)}\hat\omega$ for $t\in[0,T)$, which is a smooth positive function. By applying the maximum principle in \eqref{trace} we easily have
\begin{equation}
\partial_tM(t)\le A(M(t))^2\nonumber.
\end{equation}
When $T<\frac{1}{nA}$ and $t\in[0,T)$, we have
\begin{equation}
M(t)\le\frac{n}{1-nAt}\nonumber.
\end{equation}
Set $C_1:=\frac{n}{1-nAT}$, which is a positive constant since by assumption $T<\frac{1}{nA}$. Then we have proved that 
\begin{equation}\label{C2.1}
tr_{\omega(t)}\hat\omega\le C_1
\end{equation}
on $X\times[0,T)$. 
On the other hand, from \eqref{KRF1} we have
$$(\partial_t-\Delta_{\omega(t)})\partial_t\varphi=-tr_{\omega(t)}Ric(\hat\omega).$$
We fix a positive constant $B$ with $Ric(\hat\omega)\ge-B\hat\omega$, then by \eqref{C2.1} we have
\begin{align}
(\partial_t-\Delta_{\omega(t)})\partial_t\varphi&=-tr_{\omega(t)}Ric(\hat\omega)\nonumber\\
&\le Btr_{\omega(t)}\hat\omega\nonumber\\
&\le BC_1\nonumber,
\end{align}
from which one easily concludes that 
$$\partial_t\varphi(t)\le BC_1T$$
and so 
\begin{equation}\label{C2.2}
\omega(t)^n\le e^{BC_1T}\hat\omega^n
\end{equation}
on $X\times[0,T)$. Combining \eqref{C2.1} and \eqref{C2.2} gives
\begin{align}
tr_{\hat\omega}\omega(t)&\le\frac{1}{(n-1)!}\left(tr_{\omega(t)}\hat\omega\right)^{n-1}\frac{\omega(t)^n}{\hat\omega^n}\nonumber\\
&\le\frac{C_1^{n-1}e^{BC_1T}}{(n-1)!}\nonumber.
\end{align}
Therefore, we have
\begin{equation}\label{C2.3}
C_1^{-1}\hat\omega\le\omega(t)\le\frac{C_1^{n-1}e^{BC_1T}}{(n-1)!}\hat\omega
\end{equation}
on $X\times[0,T)$ (note that $C_1>n$ and hence $C_1^{-1}<\frac{C_1^{n-1}e^{BC_1T}}{(n-1)!}$). Having \eqref{C2.3}, we can obtain uniform higher order derivatives for $\omega(t)$ on $t\in[0,T)$ and then show, as $t\to T$, $\omega(t)\to\omega(T)$ smoothly, for some K\"ahler metric $\omega(T)$ (see \cite{C,PSS}). Consequently, one can solve the K\"ahler-Ricci flow for $t\in[0,T+\epsilon)$, where $\epsilon$ is some positive number. This is a contradiction, since we have assumed that $T$ is the maximum existence time of \eqref{KRF0}. Therefore, one must have $T\ge\frac{1}{nA}$. 
\par Proposition \ref{prop2} is proved.
\end{proof}

\begin{rem}
The $A=0$ case was proved by Nomura in \cite[Proof of Theorem 1.1]{No}, which gives a K\"ahler-Ricci flow proof for \cite[Theorem 1.1]{TY}. The above Proposition \ref{prop2} extends Nomura's result \cite{No} to the general case with arbitrary positive upper bound $A$.
\end{rem}

A consequence of Proposition \ref{prop2} is the following 
\begin{prop}\label{prop2.5}
Let $X$ be an $n$-dimensional compact K\"ahler manifold and $\alpha$ K\"ahler class on $X$ of $\mu_\alpha\ge0$. Denote $\lambda_\alpha:=\sup\{t>0|\alpha+2\pi tc_1(K_X)>0\}$. Then
\begin{equation}\label{ineq2.5}
\lambda_\alpha\ge\frac{1}{n\mu_\alpha}.
\end{equation}
\end{prop}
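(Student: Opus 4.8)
The plan is to deduce Proposition \ref{prop2.5} directly from Proposition \ref{prop2} by rescaling, handling the two cases in the definition of $\mu_\alpha$ separately. If $\mu_\alpha=0$, then the right-hand side of \eqref{ineq2.5} is $+\infty$ (interpreted appropriately), so we must show $\lambda_\alpha=\infty$; otherwise $\mu_\alpha>0$ and we must show $\lambda_\alpha\ge\frac{1}{n\mu_\alpha}$.

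First I would treat the main case $\mu_\alpha>0$. Fix any $\varepsilon>0$. By definition of $\mu_\alpha$ as an infimum, there is a K\"ahler metric $\hat\omega\in\alpha$ with $A:=\sup_X H_{\hat\omega}\le\mu_\alpha+\varepsilon$, and we may assume $A>0$ (if some metric in $\alpha$ had $\sup_X H\le0$ we would be in the case $\mu_\alpha=0$). Now run the K\"ahler–Ricci flow \eqref{KRF0} from $\hat\omega$. By Proposition \ref{prop2}, the flow admits a smooth solution on $X\times[0,\frac{1}{nA})$. Along the flow the cohomology class evolves as $[\omega(t)]=\alpha+2\pi t\,c_1(K_X)$ (since $\partial_t[\omega(t)]=-[Ric(\omega(t))]=2\pi c_1(K_X)$), and for each $t$ in the existence interval $[\omega(t)]$ is a K\"ahler class. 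Hence $\alpha+2\pi t\,c_1(K_X)>0$ for all $t\in[0,\frac{1}{nA})$, which gives $\lambda_\alpha\ge\frac{1}{nA}\ge\frac{1}{n(\mu_\alpha+\varepsilon)}$. Letting $\varepsilon\to0$ yields \eqref{ineq2.5}.

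For the case $\mu_\alpha=0$: if there is an actual K\"ahler metric $\omega\in\alpha$ with $\sup_X H_\omega\le0$, one can no longer invoke Proposition \ref{prop2} directly (it requires $A>0$), but one can either appeal to Nomura's $A=0$ version quoted in the remark after Proposition \ref{prop2}, giving the flow for all $t\ge0$ and hence $\lambda_\alpha=\infty$, or — more self-containedly — note that $\sup_X H_\omega\le0\le\varepsilon$ for every $\varepsilon>0$ so the argument above gives $\lambda_\alpha\ge\frac{1}{n\varepsilon}$ for all $\varepsilon>0$, again forcing $\lambda_\alpha=\infty$. If instead $\mu_\alpha=0$ is realized only as an infimum of positive values, then for each $\varepsilon>0$ we pick $\hat\omega\in\alpha$ with $0<\sup_X H_{\hat\omega}\le\varepsilon$ and the same flow argument gives $\lambda_\alpha\ge\frac{1}{n\varepsilon}$, so $\lambda_\alpha=\infty$. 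In all subcases the inequality holds with the convention $\frac{1}{n\cdot 0}=+\infty$.

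I do not anticipate a serious obstacle; the only points requiring care are (i) justifying that the class $[\omega(t)]$ stays K\"ahler precisely on the maximal existence interval and equals $\alpha+2\pi t\,c_1(K_X)$, which is standard for the K\"ahler–Ricci flow, and (ii) cleanly packaging the $\varepsilon\to0$ limit together with the bookkeeping of whether $\mu_\alpha$ is attained, so that the statement reads uniformly. The substantive content has already been isolated in Proposition \ref{prop2}; Proposition \ref{prop2.5} is essentially its cohomological reformulation combined with the definition of $\mu_\alpha$.
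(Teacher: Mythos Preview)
Your proof is correct and follows essentially the same approach as the paper: choose $\hat\omega\in\alpha$ with $\sup_X H_{\hat\omega}\le\mu_\alpha+\varepsilon$, run the K\"ahler--Ricci flow, invoke Proposition~\ref{prop2} to get existence on $[0,\frac{1}{n(\mu_\alpha+\varepsilon)})$, read off the K\"ahler class along the flow, and let $\varepsilon\to0$. The paper presents this uniformly without splitting into the cases $\mu_\alpha>0$ and $\mu_\alpha=0$, but your more explicit case analysis is harmless (and arguably cleaner on the point that Proposition~\ref{prop2} is stated for $A>0$).
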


\begin{proof}
For any small $\epsilon>0$, we choose a K\"ahler metric $\omega_\epsilon\in\alpha$ with $\sup_XH_{\omega_\epsilon}\le\mu_\alpha+\epsilon$. Then we consider the K\"ahler-Ricci flow $\omega_\epsilon(t)$ running from $\omega_\epsilon$ on $X$,
\begin{equation}\label{KRF2}
\left\{
\begin{aligned}
\partial_t\omega_\epsilon(t)&=-Ric(\omega_\epsilon(t))\\
\omega_\epsilon(0)&=\omega_\epsilon,
\end{aligned}
\right.
\end{equation}
which by Proposition \ref{prop2} has a smooth solution on $[0,\frac{1}{n(\mu_\alpha+\epsilon)})$. On the other hand, recall the K\"ahler class along the K\"ahler-Ricci flow \eqref{KRF2} is given by
$$[\omega_\epsilon(t)]=\alpha+2\pi tc_1(K_X).$$
So we have
\begin{equation}\label{2.5.1}
\lambda_\alpha\ge\frac{1}{n(\mu_\alpha+\epsilon)}.
\end{equation}
Since $\epsilon$ is an arbitrary positive constant, we conclude from \eqref{2.5.1} that
$$\lambda_\alpha\ge\frac{1}{n\mu_\alpha}.$$
\par Proposition \ref{prop2.5} is proved.

\end{proof}

Now we can give a
\begin{proof}[Proof of Theorem \ref{mainthm}]
By Definition \ref{defn} we may fix a sequence of K\"ahler classes $\alpha_i, i=1,2,\ldots,$ on $X$ such that $\mu_{\alpha_i}\alpha_i<i^{-1}[\hat\omega]$. We may assume $\mu_{\alpha_i}\ge0$ (otherwise we are done). If there exists some $i_0$ with $\mu_{\alpha_{i_0}}=0$, then by Proposition \ref{prop2.5} we see $\lambda_{\alpha_{i_0}}=\infty$ and hence $K_X$ is nef. So in the following we may assume $\mu_{\alpha_i}>0$ for all $i$. Again, by Proposition \ref{prop2.5} we have
$$\lambda_{\alpha_i}\ge\frac{1}{n\mu_{\alpha_i}}.$$
In particular,
$$[\alpha_i]+\frac{\pi}{n\mu_{\alpha_i}}c_1(K_X)$$
is a K\"ahler class, or equivalently,
$$\frac{n\mu_{\alpha_i}}{\pi}[\alpha_i]+c_1(K_X)$$
is a K\"ahler class. On the other hand, since $0<\frac{n\mu_{\alpha_i}}{\pi}\alpha_i<\frac{n}{\pi i}[\hat\omega]$, we easily have that
$$c_1(K_X)=\lim_{i\to\infty}\left(\frac{n\mu_{\alpha_i}}{\pi}[\alpha_i]+c_1(K_X)\right),$$ 
which by definition means $K_X$ is nef.
\par Theorem \ref{mainthm} is proved.
\end{proof}

We finish this section by discussing a byproduct of Proposition \ref{prop2}. The arguments for Proposition \ref{prop2} in fact prove the following:
\begin{prop}\label{prop4}
Let $X$ be an $n$-dimensional compact K\"ahler manifold and $\omega(t)$ a solution to the K\"ahler-Ricci flow \eqref{KRF0} on the maximal time interval $[0,T)$. Assume $T<\infty$. Then for every $t\in[0,T)$ we have
$$(T-t)\sup_XH_{\omega(t)}\ge \frac{1}{n}.$$
\end{prop}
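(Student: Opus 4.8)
The plan is to deduce this directly from Proposition \ref{prop2} together with the uniqueness of solutions to the K\"ahler-Ricci flow. Fix $t_0\in[0,T)$ and set $A_0:=\sup_XH_{\omega(t_0)}$. First I would dispose of the case $A_0\le0$: in that situation, running the argument used for Proposition \ref{prop2} with an arbitrary constant $A>0\ge A_0$ in place of $A$ (Royden's Proposition \ref{prop1} only requires $\sup_XH\le A$ for \emph{some} positive $A$) shows that the K\"ahler-Ricci flow with initial metric $\omega(t_0)$ admits a smooth solution on $[0,\tfrac{1}{nA})$ for every $A>0$, hence for all time; but by uniqueness this solution must be $s\mapsto\omega(t_0+s)$, which would force $T=\infty$, contradicting the hypothesis. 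Therefore $A_0>0$, and in particular the asserted inequality is not vacuous.

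With $A_0>0$ in hand, I would apply Proposition \ref{prop2} to the compact K\"ahler manifold $(X,\omega(t_0))$: the K\"ahler-Ricci flow with initial metric $\omega(t_0)$ admits a smooth solution on $[0,\tfrac{1}{nA_0})$. By uniqueness of solutions to the K\"ahler-Ricci flow, this solution coincides with $s\mapsto\omega(t_0+s)$, whose maximal existence interval is $[0,T-t_0)$. Hence $[0,\tfrac{1}{nA_0})\subseteq[0,T-t_0)$, that is $T-t_0\ge\tfrac{1}{nA_0}$, which rearranges precisely to $(T-t_0)\sup_XH_{\omega(t_0)}\ge\tfrac{1}{n}$.

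The only points requiring any care are the reduction to the case $A_0>0$ and the identification, via uniqueness, of the maximal existence time of the flow based at $\omega(t_0)$ with $T-t_0$; once these are settled the statement is immediate, which is exactly the sense in which ``the arguments for Proposition \ref{prop2} in fact proved'' Proposition \ref{prop4}. If one prefers not to invoke Proposition \ref{prop2} as a black box, one can instead re-run its maximum-principle computation with $\hat\omega$ replaced by $\omega(t_0)$: setting $M(s):=\sup_X\mathrm{tr}_{\omega(t_0+s)}\omega(t_0)$, the inequality $\partial_sM(s)\le A_0M(s)^2$ integrates to $M(s)\le\tfrac{n}{1-nA_0 s}$, and the higher-order estimates of \cite{C,PSS} then show the flow extends smoothly past any $s<\tfrac{1}{nA_0}$, forcing $T-t_0\ge\tfrac{1}{nA_0}$ exactly as in the proof of Proposition \ref{prop2}.
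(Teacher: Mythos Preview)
Your proof is correct and follows essentially the same approach as the paper's: apply Proposition~\ref{prop2} to the flow re-based at $\omega(t_0)$ and identify its maximal existence interval with $[0,T-t_0)$. The only cosmetic difference is that the paper establishes $\sup_XH_{\omega(t_0)}>0$ by invoking the maximal existence time theorem \cite{C,Ts,TZo} (so $K_X$ is not nef) together with Theorem~\ref{mainthm} (or \cite[Theorem 1.1]{TY}), whereas you argue directly that $A_0\le0$ would make the re-based flow immortal; both routes are valid and lead to the same conclusion.
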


\begin{proof}
Note that by the maximal existence time theorem  for the K\"ahler-Ricci flow \cite{C,Ts,TZo}, the assumption $T<\infty$ implies $K_X$ is not nef, and hence by Theorem \ref{mainthm} (or \cite[Theorem 1.1]{TY}) $\sup_XH_{\omega(t)}>0$ for every $t\in[0,T)$. Next, for any fixed $t_0\in[0,T)$, we have a K\"ahler-Ricci flow $\omega(t+t_0)$ with the maximal time interval $[0,T-t_0)$ (here we have used again the maximal existence time theorem for the K\"ahler-Ricci flow \cite{C,Ts,TZo}). Then applying Proposition \ref{prop2} gives
$$T-t_0\ge\frac{1}{n\cdot\sup_XH_{\omega(t_0)}},$$
or equivalently,
$$(T-t_0)\sup_XH_{\omega(t_0)}\ge\frac{1}{n}.$$
\par Proposition \ref{prop4} is proved.
\end{proof}

\begin{rem}
In the setting of Proposition \ref{prop4}, applying a general fact of the Ricci flow \cite[Lemma 8.6]{CLN} gives 
\begin{equation}\label{Rm}
(T-t)\sup_X|Rm(\omega(t))|\ge\frac{1}{4},
\end{equation}
which is proved by using the evolution equation of $|Rm|$ and the maximum principle (see \cite[Lemma 8.6]{CLN} for details).
On the one hand, our Proposition \ref{prop4} implies $(T-t)\sup_X|Rm(\omega(t))|\ge\frac{1}{n}$ and hence provides an alternative arguments for the above \eqref{Rm} in the K\"ahler-Ricci flow category (but the lower bound $\frac{1}{n}$ is less optimal when $n\ge5$); on the other hand, Proposition \ref{prop4} can be regarded as an improvement of \eqref{Rm} in the K\"ahler-Ricci flow category.
\end{rem}

\section{Non-existence of rational curves}\label{proof2}
To prove Theorem \ref{mainthm_2}, we need the following
\begin{prop}\cite{ToZyg}\label{prop3}
If $(X,\omega)$ is a compact K\"ahler manifold containing a rational curve $C$, then 
$$\sup_XH_\omega\ge\frac{\pi}{32\int_C\omega}.$$
\end{prop}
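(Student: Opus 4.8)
The plan is to transplant the rational curve $C$ into a smooth rational curve in a ball and run a Schwarz-type argument. Since $C\cong\mathbb{CP}^1$ may be singular as a subvariety of $X$, I would first pass to the normalization $\nu:\mathbb{CP}^1\to C\subset X$, a nonconstant holomorphic map. The quantity $\int_C\omega$ becomes $\int_{\mathbb{CP}^1}\nu^*\omega$, so it suffices to bound $\sup_XH_\omega$ from below in terms of this integral. The key point, following Tosatti--Y.G.~Zhang \cite{ToZyg}, is that a nonconstant holomorphic map from $\mathbb{CP}^1$ cannot exist if the target has a metric with sufficiently negative curvature along the image; quantitatively, restricting attention to a suitable holomorphic disk in $\mathbb{CP}^1$ and composing with $\nu$, one produces a holomorphic map $f:\mathbb{D}_r\to X$ (from a disk of radius $r$) whose image has bounded area and along which one wants to apply the Schwarz lemma with the holomorphic sectional curvature bound $H_\omega\le A:=\sup_XH_\omega$.

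The core analytic step is the Ahlfors--Schwarz lemma in the form suited to holomorphic sectional curvature: if $f:\mathbb{D}_r\to(X,\omega)$ is holomorphic and $H_\omega\le A$ on $X$ with $A>0$, then $f^*\omega$, being the pullback of a (possibly degenerate) metric, satisfies a curvature inequality that forces $f^*\omega\le \frac{C}{A}\,\omega_{\mathbb{D}_r}$ off the ramification locus, where $\omega_{\mathbb{D}_r}$ is the Poincar\'e metric of curvature $-1$ (or some fixed normalization) on the disk of radius $r$ — here one uses that the pullback metric under a holomorphic map into a Kähler manifold has holomorphic sectional curvature of the disk (which is just Gauss curvature of a surface) bounded above by $A$, by the decreasing property of holomorphic sectional curvature under restriction to complex submanifolds/curves. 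Integrating this pointwise bound over $\mathbb{D}_r$ gives $\int_{\mathbb{D}_r}f^*\omega\le \frac{C}{A}\cdot\mathrm{Area}(\mathbb{D}_r,\omega_{\mathbb{D}_r})$, and the Poincar\'e area of $\mathbb{D}_r$ is a fixed finite number; meanwhile $\int_{\mathbb{D}_r}f^*\omega$ can be taken to be a definite fraction of $\int_{\mathbb{CP}^1}\nu^*\omega=\int_C\omega$ by choosing the disk appropriately (e.g. a hemisphere of $\mathbb{CP}^1$ in the round metric accounts for half the area, hence half the $\omega$-mass). Rearranging yields $A\ge \frac{c}{\int_C\omega}$ for an explicit dimensional-free constant $c$, which after tracking constants should be $\frac{\pi}{32}$.

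The main obstacle, and the place requiring care, is making the Schwarz lemma argument rigorous in the presence of ramification points of $\nu$ (where $f^*\omega$ degenerates) and controlling the constants to land exactly on $\frac{\pi}{32\int_C\omega}$. The ramification issue is handled by the standard trick of working on the complement of finitely many points and noting that the Poincar\'e metric on the punctured disk still has finite area near punctures, or alternatively by using Royden's version of the Schwarz lemma (Proposition \ref{prop1} above provides exactly the curvature estimate $\hat g^{\bar ji}\hat g^{\bar qp}R^\omega_{i\bar jp\bar q}\le A(\mathrm{tr}_{\hat\omega}\omega)^2$ that, specialized to a curve, becomes the one-dimensional bound driving the $\partial\bar\partial\log$ computation). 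The constant-chasing is routine but must be done honestly: one fixes the round Fubini--Study metric $\omega_{FS}$ on $\mathbb{CP}^1$ with a specific normalization, uses that a hemisphere has half the total area, maps the hemisphere conformally to a fixed disk, records the Poincar\'e area and the comparison constant $\frac{n+1}{2n}$-type factor (here $n=1$ on the curve, so this is harmless), and assembles the inequality. I expect no genuine difficulty beyond bookkeeping, since the qualitative statement is immediate from Yau's Schwarz lemma philosophy; the quantitative refinement to the explicit constant $\frac{\pi}{32}$ is what \cite[Proposition 1.4, Remark 4.1]{ToZyg} already carries out in the bisectional-curvature case, and the holomorphic sectional curvature case is if anything simpler because restriction to a curve reduces everything to Gauss curvature.
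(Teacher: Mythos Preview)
Your proposal contains a genuine gap. The Ahlfors--Schwarz inequality you invoke, namely $f^*\omega\le \tfrac{C}{A}\,\omega_{\mathbb{D}_r}$ with $\omega_{\mathbb{D}_r}$ the Poincar\'e metric, is valid only when the curvature upper bound on the target is \emph{negative}; here $A=\sup_XH_\omega>0$ (indeed $A>0$ precisely because a rational curve is present), and no such pointwise comparison exists. Concretely, take $X=\mathbb{CP}^1$ with the Fubini--Study metric and $f_\lambda:\mathbb{D}\to\mathbb{CP}^1$, $z\mapsto\lambda z$: the ratio $f_\lambda^*\omega_{FS}/\omega_{\mathrm{Poin}}$ at the origin grows like $\lambda^2$ and is unbounded. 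Compounding this, your assertion that ``the Poincar\'e area of $\mathbb{D}_r$ is a fixed finite number'' is false---the complete Poincar\'e metric on any disk has infinite area---so even if the pointwise bound held, integrating it would yield nothing.

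The paper's argument (following Tosatti--Y.G.~Zhang) proceeds instead on $\mathbb{C}$ with the \emph{Euclidean} metric: the rational curve furnishes a nonconstant holomorphic $f:\mathbb{C}\to X$, and Royden's estimate (Proposition~\ref{prop1}) gives the differential inequality $\Delta_{\omega_{\mathbb{C}}}u\ge -A\,u^2$ for $u=\mathrm{tr}_{\omega_{\mathbb{C}}}f^*\omega$. The decisive extra input, missing from your sketch, is that $f$ extends across $\infty$ to a map $\mathbb{CP}^1\to X$, forcing $u$ to decay polynomially at infinity and making $\int_{\mathbb{C}}u=\int_C\omega$ finite; the quantitative lower bound on $A$ is then extracted from the tension between this differential inequality and the finite total mass/decay of $u$ on the flat plane (for instance via the growth rate of circular averages of $\log u$). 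The positivity of $A$ enters, but not through any comparison with a hyperbolic metric on the source.
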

This is essentially due to Tosatti-Y.G. Zhang \cite[Proposition 1.4, Remark 4.1]{ToZyg}, where they proved a similar result for the maximum of holomorphic \emph{bisectional} curvature.
\begin{proof}
For convenience, we present some details by following an almost identical argument given in Tosatti-Y.G. Zhang \cite[page 2939]{ToZyg}, the only modification is replacing Yau's Schwarz lemma \cite{Y78} by the one of Royden given in Proposition \ref{prop1}(1). 
\par Set $A=\sup_XH_\omega$. Firstly, since there exists a rational curve, by Yau's Schwarz Lemma \cite{Y78} we know $A>0$. The existence of a rational curve $C$ in $X$ also implies a non-constant holomorphic map $f:\mathbb{C}\to X$. Let $\omega_{\mathbb C}$ be the Euclidean metric on $\mathbb C$. By applying a direct computation and Proposition \ref{prop1}(1) one has
\begin{equation}\label{ineq1}
\Delta_{\omega_{\mathbb C}}(tr_{\omega_{\mathbb C}}f^*\omega)\ge-A(tr_{\omega_{\mathbb C}}f^*\omega)^2.
\end{equation}
on $\mathbb C$.  Having \eqref{ineq1}, one can apply the identical arguments in \cite[page 2939]{ToZyg} to complete the proof.
\par Proposition \ref{prop3} is proved.
\end{proof}

We are ready to give a
\begin{proof}[Proof of Theorem \ref{mainthm_2}]
We prove Theorem \ref{mainthm_2} by contradiction. Assume Theorem \ref{mainthm_2} fails, i.e. there exists a rational curve $C$ in $X$. Then we apply Proposition \ref{prop3} with any K\"ahler class $\alpha$ and any K\"ahler metric $\omega\in\alpha$ to see that
$$\frac{\pi}{32\int_C\alpha}=\frac{\pi}{32\int_C\omega}\le\sup_XH_{\omega},$$
and hence 
\begin{equation}
\frac{\pi}{32\int_C\alpha}\le\mu_{\alpha}\nonumber,
\end{equation}
i.e.
\begin{equation}\label{int1}
\int_C\mu_{\alpha}\alpha\ge\frac{\pi}{32}.
\end{equation}
However, by Definition \ref{defn} we may fix a sequence of K\"ahler classes $\alpha_i, i=1,2,\ldots,$ on $X$ such that $\mu_{\alpha_i}\alpha_i<i^{-1}[\hat\omega]$, and so we easily have that
\begin{equation}\label{int2}
\limsup_{i\to\infty}\left(\int_C\mu_{\alpha_i}\alpha_i\right)\le0.
\end{equation}
By combining \eqref{int1} and \eqref{int2}, we obtain a contradiction.
\par Theorem \ref{mainthm_2} is proved.
\end{proof}

\section{Miyaoka-Yau type inequalities}\label{proof3}
In this section, we first prove Theorem \ref{mainthm_3}. 

\begin{proof}[Proof of Theorem \ref{mainthm_3}]
Note that, in view of Remark \ref{rem_1} and Yau's result \cite{Y77}, we only need to prove the case that $X$ satisfies $\int_{X}c_1(K_X)^n=0$, i.e. $K_X$ is not big. 
\par By Definition \ref{defn} we may fix a sequence of K\"ahler classes $\alpha_i, i=1,2,\ldots,$ on $X$ such that $\mu_{\alpha_i}\alpha_i<i^{-1}[\hat\omega]$. We may assume $\mu_{\alpha_i}\ge0$ for all $i$.

\par By the curvature tensor decomposition and Chern-Weil theory (see \cite[page 2752-2753]{Zyg}; also see \cite[page 96]{SWa} or \cite[Proposition 2.1]{No18}), we have, for any K\"ahler metric $\omega$ on $X$,
\begin{align}\label{CW}
\left(\frac{2(n+1)}{n}c_2(X)-c_1(X)^2\right)\cdot[\omega]^{n-2}\ge-\frac{n+2}{4\pi^2n^2(n-1)}\int_X|Ric(\omega)+\omega|_\omega^2\omega^n.
\end{align}
Therefore, to prove Theorem \ref{mainthm_3}, it suffices to construct a family of K\"ahler metrics $\tilde\omega_i$ on $X$ satisfying both of the followings as $i\to\infty$:
\begin{itemize}
\item[(1)] $[\tilde\omega_i]\to2\pi c_1(K_X)$;
\item[(2)] $\int_X|Ric(\tilde\omega_i)+\tilde\omega_i|^2_{\tilde\omega_i}\tilde\omega_i^n\to0$.
\end{itemize}

We now use Wu-Yau's continuity equation: for any given K\"ahler metric $\omega_i\in\alpha_i$, consider
\begin{equation}\label{WY}
Ric(\omega_i(t))=-\omega_i(t)+t\omega_i.
\end{equation}

By using Yau's theorem \cite{Y} and Proposition \ref{prop2.5} one easily sees that \eqref{WY} has a smooth solution $\omega_i(t)$ for $t\in(n\mu_{\alpha_i},\infty)$ (in fact, by Theorem \ref{mainthm} the solution exists for $t\in(0,\infty)$). Moreover, by Proposition \ref{prop1} and arguments in \cite[Section 2]{TY} we know, for any $\epsilon>0$, if we choose $\omega_i\in\alpha_i$ with $\sup_XH_{\omega_i}\le\mu_{\alpha_i}+\epsilon$, then the solution $\omega_i(t)$ to \eqref{WY} satisfies
\begin{equation}\label{c2}
tr_{\omega_i(n\mu_{\alpha_i}+2n\epsilon)}\omega_i\le\frac{1}{\epsilon},
\end{equation} 
which in particular implies
\begin{equation}\label{c2.1}
|\omega_i|^2_{\omega_i(n\mu_{\alpha_i}+2n\epsilon)}\le\frac{n}{\epsilon^2}.
\end{equation}
We separate discussions into two cases:
\par \emph{Case (1): there exists some $i_0$ with $\mu_{\alpha_{i_0}}=0$.} In this case, we choose a family of K\"ahler metrics $\omega_{i_0,\epsilon}\in\alpha_{i_0}$ with $\sup_XH_{\omega_{i_0,\epsilon}}\le\epsilon$ and set $\tilde\omega_\epsilon:=\omega_{i_0,\epsilon}(2n\epsilon)$, where $\omega_{i_0,\epsilon}(t)$ is the solution to 
\begin{equation}\label{WY1}
Ric(\omega_{i_0,\epsilon}(t))=-\omega_{i_0,\epsilon}(t)+t\omega_{i_0,\epsilon}.
\end{equation}
Obviously, $[\tilde\omega_\epsilon]=2\pi c_1(K_X)+2n\epsilon[\alpha_{i_0}]\to2\pi c_1(K_X)$ as $\epsilon\to0$. Moreover, since $Ric(\tilde\omega_\epsilon)+\tilde\omega_\epsilon=2n\epsilon\omega_{i_0,\epsilon}$ and \eqref{c2.1} means $|\omega_{i_0,\epsilon}|^2_{\tilde\omega_\epsilon}\le\frac{n}{\epsilon^2}$, we have
\begin{align}
\int_X|Ric(\tilde\omega_\epsilon)+\tilde\omega_\epsilon|^2_{\tilde\omega_\epsilon}\tilde\omega_\epsilon^n&=\int_X|2n\epsilon\omega_{i_0,\epsilon}|^2_{\tilde\omega_\epsilon}\tilde\omega_\epsilon^n\nonumber\\
&\le 4n^3\int_X\tilde\omega_\epsilon^n\nonumber\\
&\to 4n^3(2\pi)^n\int_X c_1(K_X)^n\nonumber\\
&=0
\end{align}
\par \emph{Case (2): $\mu_{\alpha_i}>0$ for every $i$.} In this case, we choose $\omega_i\in\alpha_i$ with $\sup_XH_{\omega_i}\le2\mu_{\alpha_i}$ and set $\tilde\omega_i:=\omega_i(3n\mu_{\alpha_i})$. Then by the Definition \ref{defn} of Property (A) $[\tilde\omega_i]=2\pi c_1(K_X)+3n\mu_{\alpha_i}\alpha_i\to2\pi c_1(K_X)$ as $i\to\infty$. Moreover, \eqref{c2.1} means $|\omega_i|^2_{\tilde\omega_i}\le\frac{n}{\mu_{\alpha_i}^2}$, and hence, as $i\to\infty$,

\begin{align}
\int_X|Ric(\tilde\omega_i)+\tilde\omega_i|^2_{\tilde\omega_i}\tilde\omega_i^n&=\int_X|3n\mu_{\alpha_i}\omega_{i}|^2_{\tilde\omega_i}\tilde\omega_i^n\nonumber\\
&\le 9n^3\int_X\tilde\omega_i^n\nonumber\\
&\to 9n^3(2\pi)^n\int_X c_1(K_X)^n\nonumber\\
&=0
\end{align}

Combining Cases (1) and (2), we have proved Theorem \ref{mainthm_3}.

\end{proof}

Next, we give a proof for Theorem \ref{mainthm_4}.

\begin{proof}[Proof of Theorem \ref{mainthm_4}] 
Let's first look at the item (1), so we have a K\"ahler class $\alpha$ with $\mu_\alpha=0$. Then by the same arguments in above proof for Theorem \ref{mainthm_3}, we can construct a sequence of K\"ahler metric $\tilde\omega_\epsilon$ for $\epsilon>0$ satisfying
\begin{itemize}
\item[(1)] $[\tilde\omega_\epsilon]=2\pi c_1(K_X)+2n\epsilon\alpha$;
\item[(2)] $\int_X|Ric(\tilde\omega_\epsilon)+\tilde\omega_\epsilon|^2_{\tilde\omega_\epsilon}\tilde\omega_\epsilon^n\le4n^3\int_X\tilde\omega_\epsilon^n$
\end{itemize}
Observe that, when $\nu<n-2$, 
\begin{align}\label{CW1}
&\frac{1}{\epsilon^{n-\nu-2}}\left(\frac{2(n+1)}{n}c_2(X)-c_1(X)^2\right)\cdot[\tilde\omega_\epsilon]^{n-2}\nonumber\\
&=\frac{1}{\epsilon^{n-\nu-2}}\left(\frac{2(n+1)}{n}c_2(X)-c_1(X)^2\right)\cdot(-2\pi c_1(X)+2n\epsilon\alpha)^{n-2}\nonumber\\
&=\frac{1}{\epsilon^{n-\nu-2}}\left(\frac{2(n+1)}{n}c_2(X)-c_1(X)^2\right)\cdot\left(\sum_{k=0}^{\nu}\binom{n-2}{k}(2\pi)^k(2n\epsilon)^{n-k-2}(-c_1(X))^k\cdot\alpha^{n-k-2}\right)\nonumber\\
&\to\binom{n-2}{\nu}(2\pi)^\nu(2n)^{n-\nu-2}\left(\frac{2(n+1)}{n}c_2(X)-c_1(X)^2\right)\cdot(-c_1(X))^k\cdot\alpha^{n-k-2},
\end{align}
as $\epsilon\to0$. On the other hand, 
\begin{align}\label{CW2}
&\frac{1}{\epsilon^{n-\nu-2}}\int_X|Ric(\tilde\omega_\epsilon)+\tilde\omega_\epsilon|^2_{\tilde\omega_\epsilon}\tilde\omega_\epsilon^n\nonumber\\
&\le\frac{4n^3}{\epsilon^{n-\nu-2}}\int_X\tilde\omega_\epsilon^n\nonumber\\
&=\frac{4n^3}{\epsilon^{n-\nu-2}}\int_X\sum_{k=0}^\nu\binom{n}{k}(2\pi)^k(2n)^{n-k}\epsilon^{n-k}(-c_1(X))^k\wedge\alpha^{n-k}\nonumber\\
&\le C\epsilon^2\to0,
\end{align}
as $\epsilon\to0$. Plugging \eqref{CW1} and \eqref{CW2} into \eqref{CW} gives the desired result \eqref{MY2}.
\par Next we look at the item (2). Obviously, we only need to discuss the case $\alpha_\infty\neq[0]$. When $\alpha_\infty\neq[0]$, we must have $\mu_{\alpha_i}\to0$ as $i\to\infty$. By the same arguments in above proof for Theorem \ref{mainthm_3}, for $\epsilon>0$ we can construct a sequence of K\"ahler metric $\tilde\omega_i$ satisfying
\begin{itemize}
\item[(1)] $[\tilde\omega_i]=2\pi c_1(K_X)+3n\mu_{\alpha_i}\alpha_i$;
\item[(2)] $\int_X|Ric(\tilde\omega_i)+\tilde\omega_i|^2_{\tilde\omega_\epsilon}\tilde\omega_i^n\le9n^3\int_X\tilde\omega_i^n$
\end{itemize}
Similar to the item (1), we can show that, as $i\to\infty$,
\begin{align}\label{CW21}
&\frac{1}{\mu_{\alpha_i}^{n-\nu-2}}\left(\frac{2(n+1)}{n}c_2(X)-c_1(X)^2\right)\cdot[\tilde\omega_i]^{n-2}\nonumber\\
&\to\binom{n-2}{\nu}(2\pi)^\nu(3n)^{n-\nu-2}\left(\frac{2(n+1)}{n}c_2(X)-c_1(X)^2\right)\cdot(-c_1(X))^k\cdot\alpha_\infty^{n-k-2},
\end{align}
and
\begin{align}\label{CW22}
&\frac{1}{\mu_{\alpha_i}^{n-\nu-2}}\int_X|Ric(\tilde\omega_i)+\tilde\omega_i|^2_{\tilde\omega_i}\tilde\omega_i^n\nonumber\\
&\le C\mu_{\alpha_i}^2\to0.
\end{align}
Plugging \eqref{CW21} and \eqref{CW22} into \eqref{CW} gives the desired result \eqref{MY3}.
\par Theorem \ref{mainthm_4} is proved.
\end{proof}

Finally, we give a
\begin{proof}[Proof of Theorem \ref{mainthm_5}]
We may assume without loss of any generality that $\alpha$ is a K\"ahler class on $X$. The proof we discuss here is a simple modification of arguments in Y.G. Zhang \cite{Zyg} (also see \cite{No18}). For an arbitrary K\"ahler class $\alpha$ on $X$ and a K\"ahler metric $\omega_0\in\alpha$, we consider the K\"ahler-Ricci flow $\omega=\omega(t)_{t\in[0,\infty)}$ running from $\omega_0$:
\begin{equation}\label{krf}
\left\{
\begin{aligned}
\partial_t\omega(t)&=-Ric(\omega(t))-\omega(t)\nonumber\\
\omega(0)&=\omega_0\nonumber,
\end{aligned}
\right.
\end{equation}
along which the K\"ahler class satisfies $[\omega(t)]=-2\pi(1-e^{-t})c_1(X)+e^{-t}\alpha$. Note that, as in the above proof for Theorem \ref{mainthm_4}, we easily have, as $t\to\infty$,
\begin{align}\label{CW31}
&e^{(n-\nu-2)t}\left(\frac{2(n+1)}{n}c_2(X)-c_1(X)^2\right)\cdot[\omega(t)]^{n-2}\nonumber\\
&e^{(n-\nu-2)t}\left(\frac{2(n+1)}{n}c_2(X)-c_1(X)^2\right)\cdot(-(1-e^{-t})2\pi c_1(X)+e^{-t}\alpha)^{n-2}\nonumber\\
&\to\binom{n-2}{\nu}(2\pi)^\nu\left(\frac{2(n+1)}{n}c_2(X)-c_1(X)^2\right)\cdot(-c_1(X))^\nu\cdot\alpha^{n-\nu-2},
\end{align}
Then, to complete the proof we make the following\\ 
\textbf{Claim}: as $t\to\infty$ there holds
\begin{equation}\label{CW32_0}
e^{(n-\nu-2)t}\int_X|Ric(\omega(t))+\omega(t)|^2_{\omega(t)}\omega(t)^n\to0.
\end{equation}
To see this, we need the followings:
\begin{equation}\label{evo1}
\partial_t\omega(t)^n=-(S(t)+n)\omega(t)^n\nonumber,
\end{equation}
\begin{equation}\label{evo2}
\partial_tS(t)=\Delta_{\omega(t)}S(t)+|Ric(\omega(t))+\omega(t)|^2_{\omega(t)}-(S(t)+n)\nonumber.
\end{equation}

By calculations in \cite{Zyg}, we have
\begin{align}
&\int_X|Ric(\omega(t))+\omega(t)|^2_{\omega(t)}\omega(t)^n\nonumber\\
&=\int_X(\partial_tS(t))\omega(t)^n+\int_X(S(t)+n)\omega(t)^n\nonumber\\
&=\partial_t\left(\int_XS(t)\omega(t)^n\right)+\int_X(S(t)+1)(S(t)+n)\omega(t)^n\nonumber,
\end{align}
and so
\begin{align}\label{CW32}
&e^{(n-\nu-2)t}\int_X|Ric(\omega(t))+\omega(t)|^2_{\omega(t)}\omega(t)^n\nonumber\\
&=e^{(n-\nu-2)t}\partial_t\left(\int_XS(t)\omega(t)^n\right)+e^{(n-\nu-2)t}\int_X(S(t)+1)(S(t)+n)\omega(t)^n\nonumber\\
&=\partial_t\left(e^{(n-\nu-2)t}\int_XS(t)\omega(t)^n\right)+e^{(n-\nu-2)t}\int_X(S(t)^2+(\nu+3)S(t)+n)\omega(t)^n.
\end{align}

Set $L(t):=e^{(n-\nu-2)t}\int_XS(t)\omega(t)^n$. By direct computation we have
\begin{align}
L(t)&=e^{(n-\nu-2)t}\int_XS(t)\omega(t)^n\nonumber\\
&=ne^{(n-\nu-2)t}(2\pi c_1)\cdot(-2\pi(1-e^{-t})c_1(X)+e^{-t}\alpha)^{n-1}\nonumber\\
&=\sum_{k=0}^{\nu-1}A_k(1-e^{-t})^ke^{(k-\nu-1)t}\nonumber,
\end{align}
where $A_k$'s are some constant only depending on $c_1(X),\alpha,n$ and $k$, and hence we can find a positive constant $C$ such that
\begin{equation}\label{CW32.0}
L(t)\le Ce^{-2t}.
\end{equation}
and
\begin{equation}\label{CW32.1}
\partial_tL(t)\le Ce^{-2t}.
\end{equation}

Using \eqref{CW32.0} and the easy fact $[\omega(t)]^n\le Ce^{-(n-v)t}$, we see that the second term in \eqref{CW32} satisfies
\begin{align}
&e^{(n-\nu-2)t}\int_X(S(t)^2+(\nu+3)S(t)+n)\omega(t)^n\nonumber\\
&=e^{(n-\nu-2)t}\int_XS(t)^2\omega(t)^n+(\nu+3)L(t)+ne^{(n-\nu-2)}[\omega(t)]^n\nonumber\\
&\le e^{(n-\nu-2)t}\int_XS(t)^2\omega(t)^n+C_2e^{-2t}\nonumber,
\end{align}
where $C$ is some uniform positive constant. 
\par Next we recall a result of Song-Tian \cite{ST} that the scalar curvature $S(t)$ is uniformly bounded on $X\times[0,\infty)$ when $K_X$ is semi-ample (this is the only place using the semi-ampleness of $K_X$), and so 
\begin{align}\label{CW32.2}
&e^{(n-\nu-2)t}\int_X(S(t)^2+(\nu+3)S(t)+n)\omega(t)^n\nonumber\\
&\le Ce^{(n-\nu-2)t}\int_X\omega(t)^n+Ce^{-2t}\nonumber\\
&\le Ce^{-2t}
\end{align}

Plugging \eqref{CW32.1} and \eqref{CW32.2} into \eqref{CW32} gives
$$e^{(n-\nu-2)t}\int_X|Ric(\omega(t))+\omega(t)|^2_{\omega(t)}\omega(t)^n\le Ce^{-2t},$$
which completes the proof of Claim. 
\par Now the \eqref{MY4} follows by plugging \eqref{CW31} and \eqref{CW32_0} into \eqref{CW}.
\par Theorem \ref{mainthm_5} is proved.
\end{proof}

\section{Remarks on the almost nonpositive holomorphic sectional curvature}\label{properties}

In this section, we shall make some more remarks on the almost nonpositive holomorphic sectional curvature introduced in Definition \ref{defn}. Let's begin with the following one, which directly follows from the well-known decreasing property of holomorphic sectional curvature on submanifolds.

\begin{prop}\label{prop_a}
Let $X$ be a compact K\"ahler manifold of almost nonpositive holomorphic sectional curvature and $Y$ a compact complex submanifold of $X$. Then $Y$ is also of almost nonpositive holomorphic sectional curvature.
\end{prop} 

Given Proposition \ref{prop_a},  all the conclusions in Theorems \ref{mainthm}, \ref{mainthm_2} and \ref{mainthm_3} hold for every compact complex submanifold of $X$, provided $X$ is of almost nonpositive holomorphic sectional curvature. For example, as in \cite[Corollary]{WY1} and \cite[Remark 1.5]{TY}, we can conclude that  \emph{if $X$ is a compact K\"ahler manifold of almost nonpositive holomorphic sectional curvature, then every compact complex submanifold of $X$ has a nef canonical line bundle.}\\

Next we observe that the almost nonpositive holomorphic sectional curvature is preserved under the product of K\"ahler manifolds.
\begin{prop}\label{prop_b}
Let $X,Y$ be two compact K\"ahler manifolds and $Z:=X\times Y$ the product manifold with the product complex structure. If both $X$ and $Y$ are of almost nonpositive holomorphic sectional curvature, then $Z$ is also of almost nonpositive holomorphic sectional curvature.
\end{prop} 
\begin{proof}
To complete the proof, we need the following \\
\textbf{Claim}: \emph{Given a K\"ahler metric $\omega$ on $X$ and a K\"ahler metric $\eta$ on $Y$, if we choose two positive constants $A_1$ and $A_2$ satisfying $\sup_XH_{\omega}\le A_1$ and $\sup_YH_{\eta}\le A_2$, then the product K\"ahler metric $\chi:=\omega+\eta$ on $Z$ satisfies $\sup_ZH_{\chi}\le A_1+A_2$.}
\par This Claim can be easily checked by using the definition of holomorphic sectional curvature and the product structure; so we omit the details here. 
\par By assumption, we fix a sequence of K\"ahler classes $\alpha_i$ on $X$ and $\beta_i$ on $Y$ such that $0\le\mu_{\alpha_i}\alpha_i\to0$ and $0\le\mu_{\beta_i}\beta_i\to0$. We separate discussions into three cases as follows:\\

\emph{Case 1: there exist a K\"ahler class $\alpha$ on $X$ with $\mu_\alpha=0$ and a K\"ahler class $\beta$ on $Y$ with $\mu_\beta=0$.} In this case we have $\mu_{\alpha+\beta}\le0$. Indeed, for any $\epsilon>0$ we choose K\"ahler metrics $\omega_\epsilon\in\alpha$ and $\eta_\epsilon\in\beta$ such that $\sup_XH_{\omega_\epsilon}\le\epsilon$ and $\sup_YH_{\eta_\epsilon}\le\epsilon$, and define $\chi_\epsilon:=\omega_\epsilon+\eta_\epsilon$ be the product K\"ahler metric on $Z$. By the above Claim we have $\sup_ZH_{\chi_\epsilon}\le2\epsilon$, and so $\mu_{\alpha+\beta}\le0$.\\

\emph{Case 2: there exists a K\"ahler class $\alpha$ on $X$ with $\mu_\alpha=0$ and $\mu_{\beta_i}>0$ for every $i$.} For a fixed sequence of positive numbers $\epsilon_i$ with $\epsilon_i\to0$, we can choose K\"ahler metrics $\omega_i\in\alpha$ with $\sup_XH_{\omega_i}\le\epsilon_i$. Next we set $\tilde\beta_i:=\frac{\mu_{\beta_i}}{\epsilon_i}\beta_i$, which is a sequence of K\"ahler metrics on $Y$ with $\mu_{\tilde\beta_i}=\epsilon_i$. Notice that $\epsilon_i\tilde\beta_i=\mu_{\tilde\beta_i}\tilde\beta_i=\mu_{\beta_i}\beta_i\to0$. Now we choose for every $i$ a K\"ahler metric $\eta_i\in\tilde\beta_i$ with $\sup_YH_{\eta_i}\le2\epsilon_i$ and define K\"ahler metrics $\chi_i:=\omega_i+\eta_i$ on $Z$. By the above Claim we know $\sup_ZH_{\chi_i}\le3\epsilon_i$, and hence
$$\mu_{\alpha+\tilde\beta_i}(\alpha+\tilde\beta_i)\le3\epsilon_i(\alpha+\tilde\beta_i)\to0.$$

\emph{Case 3: $\mu_{\alpha_i}>0$ and $\mu_{\beta_i}>0$ for every $i$.} Similar to Case 2, we may define a sequence of K\"ahler classes on $Y$ by $\tilde\beta_i:=\frac{\mu_{\beta_i}}{\mu_{\alpha_i}}\beta_i$, which satisfies $\mu_{\tilde\beta_i}=\mu_{\alpha_i}$ and $\mu_{\alpha_i}\tilde\beta_i=\mu_{\tilde\beta_i}\tilde\beta_i=\mu_{\beta_i}\beta_i\to0$. Also we can choose K\"ahler metrics $\omega_i\in\alpha_i$ and $\eta_i\in\tilde\beta_i$ with $\sup_XH_{\omega_i}\le2\mu_{\alpha_i}$ and $\sup_YH_{\eta_i}\le2\mu_{\tilde\beta_i}=2\mu_{\alpha_i}$. Therefore,
$$\mu_{\alpha_i+\tilde\beta_i}(\alpha_i+\tilde\beta_i)\le4\mu_{\alpha_i}(\alpha_i+\tilde\beta_i)\to0.$$
Proposition \ref{prop_b} is proved.
\end{proof}

\begin{disc}\label{disc}
Set $\mathcal M^n_1$ be the space of $n$-dimensional compact K\"ahler manifolds admitting a K\"ahler metric with semi-negative holomorphic sectional curvature, $\mathcal M^n_2$ the space of $n$-dimensional compact K\"ahler manifolds admitting a K\"ahler class $\alpha$ with $\mu_\alpha=0$, and $\mathcal M^n_3$ the space of $n$-dimensional compact K\"ahler manifolds of almost nonpositive holomorphic sectional curvature. Obviously we have the inclusions $\mathcal M^n_1\subset\mathcal M^n_2\subset\mathcal M^n_3$. We may have a natural question that are these inclusions strict? Let's look at the first inclusion $\mathcal M^n_1\subset\mathcal M^n_2$. For an arbitrary $X\in\mathcal M^n_2$, we fix a K\"ahler class $\alpha$ on $X$ with $\mu_\alpha=0$ and a sequence of K\"ahler metrics $\omega_i\in\alpha$ with $\sup_XH_{\omega_i}\le i^{-1}$. A natural approach to check $X\in\mathcal M^n_1$ is to prove that $\omega_i$ converges to a K\"ahler metric $\omega_\infty\in\alpha$ smoothly (or at least in $C^2$). If this can be carried out, then easily we have $H_{\omega_\infty}\le0$ and hence $X\in\mathcal M^n_1$. However, it seems unclear how to obtain uniform higher order estimates for $\omega_i$ (in part because $\omega_i$'s do not satisfy any geometric equations) and hence hard to get the smooth convergence of $\omega_i$. In general, $\omega_i$ may degenerate or blowup as $i\to\infty$. More generally, if we consider an arbitrary $X\in\mathcal M^n_3$ and fix a sequence of K\"ahler classes $\alpha_i$ on $X$ with $\mu_{\alpha_i}\alpha_i\to0$, then even the K\"ahler classes $\alpha_i$ may go to the boundary or infinity of the K\"ahler cone of $X$ (and so it is impossible to get smooth convergences to a K\"ahler metric from the family of K\"ahler metrics $\omega_i\in\alpha_i$). From this viewpoint, it seems that \emph{a compact K\"ahler manifold of almost nonpositive holomorphic sectional curvature is in general far from admitting a K\"ahler metric with semi-negative holomorphic sectional curvature}. Unfortunately, at the moment we don't have any examples of compact K\"ahler manifolds in $\mathcal M^n_3\setminus\mathcal M^n_1$. Along this line we have some natural interesting problems, e.g. (1) find examples of compact K\"ahler manifolds in $\mathcal M^n_3\setminus\mathcal M^n_1$; (2) given an $X\in\mathcal M^n_3$, find characterizations on $X$ such that $X\in\mathcal M^n_1$; more restrictly, for a given $X\in\mathcal M^n_2$ and a K\"ahler class $\alpha$ on $X$ with $\mu_\alpha=0$, find certain characterizations on $(X,\alpha)$ such that there exists a K\"ahler metric $\overline\omega\in\alpha$ with $H_{\overline\omega}\le0$. Such metric $\overline\omega$, if exists, may be naturally called an HSC-extremal metric in $\alpha$, and may be considered as a canonical/good metric in $\alpha$.
\end{disc}

\section*{Acknowledgements}
The author is grateful to Professors Huai-Dong Cao and Gang Tian for their interest in this work, and constant encouragement and support. The author also thanks Professors Gang Tian for conversations on Chern number inequality, Valentino Tosatti for discussions on papers \cite{DT,TY,WY1}, Chengjie Yu and Tao Zheng for comments and the referees for careful readings and a number of helpful comments and corrections.

\end{document}